\documentclass[11pt]{article}

\usepackage{amsmath,amssymb,amsthm,mathtools,bm}
\usepackage{booktabs}
\usepackage{geometry}
\usepackage{enumitem}
\usepackage{float}
\usepackage{hyperref}
\usepackage{xcolor}
\usepackage{graphicx}

\newcommand{\R}{\mathbb{R}}

\newcommand{\A}{\mathcal{A}}
\newcommand{\E}{\mathcal{E}}
\newcommand{\CN}{\mathcal{C}_N}
\newcommand{\SN}{S_N}
\newcommand{\SNP}{S_N^+}
\newcommand{\dd}{\,\mathrm{d}}

\newcommand{\eps}{\varepsilon}
\newcommand{\sig}{\sigma}
\newcommand{\inner}[2]{\left\langle #1,#2\right\rangle}
\newcommand{\norm}[1]{\left\|#1\right\|}
\newcommand{\abs}[1]{\left|#1\right|}
\newcommand{\grad}{\nabla}
\newcommand{\half}{\frac12}

\theoremstyle{plain}
\newtheorem{theorem}{Theorem}[section]
\newtheorem{lemma}[theorem]{Lemma}
\newtheorem{proposition}[theorem]{Proposition}
\newtheorem{corollary}[theorem]{Corollary}

\theoremstyle{definition}
\newtheorem{definition}[theorem]{Definition}

\numberwithin{equation}{section}
\counterwithin{figure}{section}
\counterwithin{table}{section}

\title{\bf
A positive-conservative Fourier optimization method for ground states of Bose--Einstein condensates with higher-order interactions
}

\author{
Bo Lin,\thanks{Beijing Huairou Laboratory, Beijing 101400,  People's Republic of China} \quad
Xinran Ruan\thanks{(Corresponding author) School of Mathematical Sciences, Capital Normal University, Beijing 100048, People's Republic of China {\tt (xinran.ruan@cnu.edu.cn)}.}
}
\date{}

\begin{document}
\maketitle

\begin{abstract}
We develop a positive-conservative Fourier optimization (PCFO) method
for computing ground states of Bose--Einstein condensates with
higher-order interactions. The ground-state problem admits a convex
density formulation, but the singular behavior near zero density
poses difficulties for high-accuracy computation. We introduce
convex regularizations of the density energy and establish their
$\Gamma$-convergence on the computational domain as the regularization
parameters vanish. The regularized problem is discretized by a
Fourier pseudospectral method with mass conservation and nodal
positivity. 
For the resulting constrained optimization problem, we use an accelerated
first-order method for the main energy reduction, followed by a Newton
refinement to reduce the remaining optimality residual. 
Numerical experiments show spectral-type convergence
for regularized problems, quantify the effects of the
regularization parameters on accuracy and spatial resolution, and
demonstrate the effectiveness of the two-stage optimization method.
\end{abstract}

\noindent
\textbf{Key words.}
Bose--Einstein condensate; higher-order interaction; density
formulation; convex optimization; Fourier
pseudospectral method; positivity preservation. 

\medskip

\noindent
\textbf{2020 MSC.}
35Q55, 65N35, 65K10, 90C25.

\section{Introduction}

Bose--Einstein condensation, first realized experimentally in dilute
atomic gases in 1995 \cite{Anderson1995}, provides a fundamental
setting for the study of macroscopic quantum phenomena. 
In the dilute and low-temperature regime, Bose--Einstein condensates are commonly
described by the Gross--Pitaevskii equation \cite{BaoCaiReview}, in
which two-body interactions are modeled by a zero-range contact
potential. Finite-range and effective-range corrections arise beyond
this approximation
\cite{CollinMassignanPethick2007,FuWangGao2003} and may become more
relevant under strong confinement or near narrow Feshbach resonances
\cite{Chin2010,VekslerFishmanKetterle2014,ZinnerThogersen2009}.
One such correction leads to a modified Gross--Pitaevskii model with
a higher-order interaction involving the gradient of the density.

We consider the modified Gross--Pitaevskii model with higher-order
interactions (HOI), whose energy is
\begin{equation}
\label{eq:wave-energy}
E(\phi)
=
\int_{\R^d}
\left[
\half |\grad\phi|^2
+V(x)|\phi|^2
+\frac{\beta}{2}|\phi|^4
+\frac{\delta}{2}\bigl|\grad(|\phi|^2)\bigr|^2
\right]\dd x,
\end{equation}
subject to
\begin{equation}
\label{eq:wave-mass}
\norm{\phi}_{L^2}^2=1.
\end{equation}
Here $\phi$ is the macroscopic wave function of the condensate,
$V(x)\ge0$ is a confining potential, $\beta\ge0$ is the repulsive
contact-interaction strength, and $\delta>0$ is the higher-order
interaction strength.
The competition between the contact and
higher-order interactions can affect ground-state structures and
their asymptotic regimes \cite{BaoCaiRuan2019,RuanCaiBao2016}. Under
strong confinement, the higher-order interaction also modifies the
effective lower-dimensional mean-field models
\cite{RuanCaiBao2016}.

As shown in \cite{BaoCaiRuan2019}, a ground state may be chosen real
and nonnegative. This motivates working with the density
$\rho=|\phi|^2$, which gives the equivalent formulation
\begin{equation}
\label{eq:density-energy}
\E_0(\rho)
=
\int_{\R^d}
\left[
\frac{|\grad\rho|^2}{8\rho}
+V(x)\rho
+\frac{\beta}{2}\rho^2
+\frac{\delta}{2}|\grad\rho|^2
\right]\dd x,
\end{equation}
over
\begin{equation}
\label{eq:admissible-density}
\A
=
\left\{
\rho\in H^1(\R^d):
\rho\ge0,\quad
\int_{\R^d}\rho\,\dd x=1,\quad
\int_{\R^d}V\rho\,\dd x<\infty
\right\}.
\end{equation}
The quotient $|\grad\rho|^2/(8\rho)$ is interpreted in the extended
sense defined in Section~\ref{sec:density}. For $\beta\ge0$,
$\E_0$ is strictly convex on $\A$. This convexity is useful for
ground-state computation, although the kinetic term remains singular
at zero density.

Ground-state computation for the Gross--Pitaevskii equation has been
studied extensively. Early approaches include direct energy
minimization and normalized gradient-flow methods
\cite{BaoDu2004,BaoTang2003}, together with spectrally accurate
discretizations \cite{BaoChernLim2006}. For strongly nonlinear or
rotating condensates, Sobolev-gradient methods, preconditioned
spectral solvers, nonlinear conjugate-gradient methods, regularized
Newton methods, and Riemannian optimization have been developed
\cite{AntoineDuboscq2014,AntoineLevittTang2017,
DanailaKazemi2010,DanailaProtas2017,WuWenBao2017}. More recent work
includes normalized Sobolev-gradient flows and nonlinear eigenvalue
iterations with convergence analysis
\cite{AltmannHenningPeterseim2021,ChenLuLuZhang2024,
HenningPeterseim2020}. Fourier pseudospectral discretizations combined
with manifold optimization have also been applied to more general
spinor condensates \cite{TianCaiWuWen2020}.

For the HOI model, a normalized gradient-flow method with an
attractive--repulsive splitting was developed in \cite{Ruan2018}.
 Bao and Ruan \cite{BaoRuan2019} subsequently formulated the
ground-state problem as a convex minimization problem in the density
variable, using a shifted kinetic regularization, a second-order
finite-difference discretization, and an accelerated projected-gradient
method, with a square-root regularization of the potential considered
numerically.

To enable spectral spatial accuracy within the density formulation, we
further develop the framework of \cite{BaoRuan2019} by improving both
the spatial discretization and the optimization solver. The
second-order finite-difference discretization is replaced by a Fourier
pseudospectral method with exact mass conservation and nodal
positivity. The potential term is incorporated into the proximal step
of the accelerated first-order method, followed by a Newton refinement
to reduce the optimization error so that it does not limit the spatial
discretization accuracy. We also extend the shifted kinetic
regularization to a class of convexity-preserving denominators and
treat the potential regularization with an independent scale. The
resulting regularized energies are shown to $\Gamma$-converge on the
computational domain as the regularization parameters vanish. The
spatial accuracy, regularization effects, and solver performance are
then examined numerically.

The rest of the paper is organized as follows.
Section~\ref{sec:density} introduces the convex regularizations and
establishes their variational convergence.
Section~\ref{sec:ps} presents the Fourier pseudospectral discretization
and the projection preserving positivity and mass.
Section~\ref{sec:opt} develops the potential-proximal FISTA method and
the Newton refinement. Section~\ref{sec:num} reports the numerical
results, and Section~\ref{sec:summary} concludes the paper.
\section{Density formulation and convex regularization}
\label{sec:density}

In this section, we study the convex structure of the density energy,
introduce convex regularizations of the kinetic and potential terms,
and establish variational convergence of the regularized energies.

\subsection{Convex regularization of the kinetic and potential terms}

\paragraph{Kinetic regularization.}

We extend the density kinetic term to possible vacuum regions.
For $s\ge0$ and $q\in\R^d$, let
\begin{equation}
\label{eq:kinetic-integrand}
F_0(s,q)
=
\begin{cases}
\dfrac{|q|^2}{8s}, & s>0,\\[1ex]
0, & s=0,\ q=0,\\
+\infty, & s=0,\ q\ne0.
\end{cases}
\end{equation}
With this convention, the density energy in
\eqref{eq:density-energy} can be written as
\begin{equation}
\label{eq:density-energy-extended}
\E_0(\rho)
=
\int_{\R^d}
\left[
F_0(\rho,\nabla\rho)
+V\rho
+\frac{\beta}{2}\rho^2
+\frac{\delta}{2}|\nabla\rho|^2
\right]\dd x,
\qquad
\rho\in\A.
\end{equation}

The density energy has the following convexity property.

\begin{proposition}[Convexity of the density energy]
\label{prop:density-convexity}
Assume $\beta\ge0$ and $\delta>0$.
Then $\E_0$ is strictly convex on $\A$ and hence has at most one
minimizer.
\end{proposition}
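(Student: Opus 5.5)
The plan is to prove convexity term by term and obtain strictness from the term $\frac{\delta}{2}|\nabla\rho|^2$, using the mass constraint to exclude the degenerate directions.

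\emph{Convexity of $\A$ and of each term.} The admissible set $\A$ is convex: the conditions $\rho\ge0$, $\int\rho=1$ and $\int V\rho<\infty$ (recall $V\ge0$) are all preserved under convex combinations. The terms $V\rho$ and $\frac{\beta}{2}\rho^2$ (with $\beta\ge0$) are convex pointwise in $\rho$. For the kinetic integrand $F_0$ in \eqref{eq:kinetic-integrand}, the plan is to show that it is the perspective of the convex function $q\mapsto |q|^2/8$, extended by lower semicontinuity to $s=0$. A perspective is jointly convex on $\{s>0\}\times\R^d$. The extension can be checked directly: $F_0$ equals the supremum of the affine functions $(s,q)\mapsto a s+ b\cdot q$ over all $(a,b)$ with $a+2|b|^2\le 0$. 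Indeed, for $s>0$ one has $\sup_b\,(b\cdot q-2|b|^2 s)=|q|^2/(8s)$. For $s=0$ the supremum is $0$ if $q=0$ and $+\infty$ if $q\neq 0$. Being a supremum of affine functions, $F_0$ is convex and lower semicontinuous on $[0,\infty)\times\R^d$ with values in $[0,\infty]$. Since $\nabla$ is linear, $\rho\mapsto F_0(\rho,\nabla\rho)$ is pointwise convex along segments in $\A$, and integrating gives convexity of the kinetic part in the extended-valued sense.

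\emph{Strictness.} Take $\rho_1\ne\rho_2$ in $\A$ with $\E_0(\rho_1),\E_0(\rho_2)<\infty$ (otherwise there is nothing to prove), and fix $t\in(0,1)$. The map $\rho\mapsto\int\frac{\delta}{2}|\nabla\rho|^2$ is a quadratic form, so
\[
t\,\|\nabla\rho_1\|_{L^2}^2+(1-t)\|\nabla\rho_2\|_{L^2}^2-\|\nabla(t\rho_1+(1-t)\rho_2)\|_{L^2}^2=t(1-t)\|\nabla(\rho_1-\rho_2)\|_{L^2}^2 .
\]
The right-hand side is positive unless $\nabla(\rho_1-\rho_2)=0$ almost everywhere. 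In that case $\rho_1-\rho_2$ is constant on $\R^d$, and since it lies in $H^1(\R^d)\subset L^2(\R^d)$ the constant must be $0$, contradicting $\rho_1\neq\rho_2$. The other terms are convex, so their convexity defects are nonnegative, and the sum is strictly convex. Finiteness of each term is needed here to subtract the energies legitimately; it holds because $\E_0(\rho_i)<\infty$ and all terms are nonnegative. Alternatively, strictness also follows from the $\beta$-term when $\beta>0$, but the $\delta$-term covers $\beta=0$ as well.

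\emph{Uniqueness and the main obstacle.} Uniqueness follows in the standard way. If $\rho_1\neq\rho_2$ were both minimizers with finite energy $m$, then $\frac12(\rho_1+\rho_2)\in\A$ would satisfy $\E_0<m$, a contradiction. The step I expect to need the most care is the convexity of $F_0$ at the vacuum boundary. The extended convention must not break convexity along segments where one density vanishes on a set where the other does not. The supremum-of-affine-functions representation handles this cleanly, since it makes convexity hold everywhere on $[0,\infty)\times\R^d$ including $s=0$. Measurability of $F_0(\rho,\nabla\rho)$ then follows from lower semicontinuity.
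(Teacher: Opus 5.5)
Your proof is correct and follows the same route as the paper. The kinetic term is convex as the perspective of $q\mapsto|q|^2$ together with its lower-semicontinuous extension to $s=0$, and strict convexity comes from the $\delta$ term, since equality forces $\nabla\rho_1=\nabla\rho_2$ a.e.\ and integrability rules out a nonzero constant difference. Your supremum-of-affine-functions representation of $F_0$, your handling of finiteness, and the midpoint uniqueness argument fill in details the paper only states. Note that your argument actually uses $L^2$-integrability, not the mass constraint you mention at the outset, to exclude the constant; your opening sentence should say so.
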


\begin{proof}
The map $(s,q)\mapsto |q|^2/s$ is the perspective of the convex
quadratic function $q\mapsto |q|^2$, and
\eqref{eq:kinetic-integrand} is its lower-semicontinuous convex
extension to $s=0$. Hence the kinetic term is convex. The potential
term is linear, and the $\beta$ term is convex for $\beta\ge0$.

The $\delta$ term gives strict convexity on $\A$. Indeed, equality in
its convexity inequality for two densities in $\A$ implies that their
gradients coincide almost everywhere. Their difference is therefore
constant, and integrability on $\R^d$ implies that this constant is
zero.
\end{proof}

To remove the singularity at zero density while preserving the convex
structure, we regularize the kinetic denominator. For $s\ge0$ and
$q\in\R^d$, set
\begin{equation}
\label{eq:regularized-kinetic-integrand}
F_\eps(s,q)
=
\frac{|q|^2}{8r_\eps(s)},
\end{equation}
where $r_\eps$ is a positive regularization of the denominator.

\begin{definition}[Admissible denominator]
\label{def:admissible-regularization}
A family $\{r_\eps\}_{\eps>0}$ is called admissible if, for every
$\eps>0$,
\begin{equation}
\label{eq:regularization-admissibility-local}
r_\eps\in C^1([0,\infty)),
\qquad
r_\eps>0,
\qquad
r_\eps'\ge0,
\qquad
r_\eps \text{ is concave},
\end{equation}
and, for every $s\ge0$,
\begin{equation}
\label{eq:regularization-admissibility-limit}
s
\le
r_{\eps_1}(s)
\le
r_{\eps_2}(s)
\quad
\text{for }0<\eps_1\le\eps_2,
\qquad
r_\eps(s)\to s
\quad\text{as }\eps\to0^+.
\end{equation}
\end{definition}

The concavity of the denominator preserves the
convexity of the regularized kinetic term.

\begin{lemma}[Convexity of the regularized kinetic integrand]
\label{lem:regularized-kinetic-convexity}
Let $r_\eps$ be admissible in the sense of Definition \ref{def:admissible-regularization}. Then
\[
F_\eps(s,q)
=
\frac{|q|^2}{8r_\eps(s)}
\]
is jointly convex in $(s,q)\in[0,\infty)\times\R^d$.
\end{lemma}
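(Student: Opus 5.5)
The plan is to write $F_\eps$ as a composition of the perspective-type function from Proposition~\ref{prop:density-convexity} with a concave inner map, and then use a monotonicity argument. Let
\[
G(t,q)=\frac{|q|^2}{8t},\qquad t>0,\ q\in\R^d .
\]
As in the proof of Proposition~\ref{prop:density-convexity}, $G$ is the perspective of the convex function $q\mapsto |q|^2/8$, so $G$ is jointly convex on $(0,\infty)\times\R^d$. For fixed $q$, $G$ is also nonincreasing in $t$. Since $r_\eps>0$ on $[0,\infty)$, we have $F_\eps(s,q)=G(r_\eps(s),q)$, and this is finite everywhere, so no extended-value conventions are needed.

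Take $(s_0,q_0),(s_1,q_1)\in[0,\infty)\times\R^d$ and $\lambda\in[0,1]$. Write $s_\lambda=(1-\lambda)s_0+\lambda s_1$ and $q_\lambda=(1-\lambda)q_0+\lambda q_1$. Concavity of $r_\eps$ gives
\[
r_\eps(s_\lambda)\ge (1-\lambda)r_\eps(s_0)+\lambda r_\eps(s_1)=:\bar t>0 .
\]
Because $G(\cdot,q_\lambda)$ is nonincreasing, $G(r_\eps(s_\lambda),q_\lambda)\le G(\bar t,q_\lambda)$. Joint convexity of $G$, applied at the points $(r_\eps(s_0),q_0)$ and $(r_\eps(s_1),q_1)$, then yields
\[
G(\bar t,q_\lambda)\le (1-\lambda)G(r_\eps(s_0),q_0)+\lambda G(r_\eps(s_1),q_1).
\]
This is exactly the convexity inequality for $F_\eps$.

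The only step that needs real justification is the joint convexity of $G$. The paper takes it from the perspective-function fact, but it can also be checked directly. One route is the Hessian: the $2\times2$ block in the variables $(t,q\cdot e)$ is positive semidefinite with zero determinant. A cleaner route is the identity
\[
\frac{|q|^2}{t}=\sup_{p\in\R^d}\Bigl(2p\cdot q-t|p|^2\Bigr),\qquad t>0,
\]
which exhibits $G$ as a supremum of affine functions of $(t,q)$ and hence as convex. Only positivity and concavity of $r_\eps$ are used; the monotonicity $r_\eps'\ge0$ and the conditions in \eqref{eq:regularization-admissibility-limit} play no role here.
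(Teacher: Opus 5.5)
Your proof is correct and follows the paper's argument. Concavity of $r_\eps$ bounds the denominator from below by the convex combination, monotonicity of $t\mapsto |q|^2/(8t)$ carries that bound over to the integrand, and the joint convexity of $|q|^2/t$ finishes the proof. The paper states that last step as a weighted Cauchy--Schwarz inequality, while you justify it through the supremum-of-affine-functions identity; as you note, only $r_\eps>0$ and concavity are needed.
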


\begin{proof}
Let $0\le\theta\le1$ and set
\[
s_\theta=\theta s_1+(1-\theta)s_2,
\qquad
q_\theta=\theta q_1+(1-\theta)q_2.
\]
By the concavity of $r_\eps$,
\[
r_\eps(s_\theta)
\ge
\theta r_\eps(s_1)+(1-\theta)r_\eps(s_2).
\]
Hence
\[
F_\eps(s_\theta,q_\theta)
\le
\frac{|q_\theta|^2}
{8[\theta r_\eps(s_1)+(1-\theta)r_\eps(s_2)]}.
\]
The weighted Cauchy--Schwarz inequality gives
\[
\frac{|q_\theta|^2}
{\theta r_\eps(s_1)+(1-\theta)r_\eps(s_2)}
\le
\theta\frac{|q_1|^2}{r_\eps(s_1)}
+
(1-\theta)\frac{|q_2|^2}{r_\eps(s_2)},
\]
which proves the result.
\end{proof} 

The simplest example is the shifted denominator introduced in
\cite{BaoRuan2019},
\begin{equation}
\label{eq:shift-denominator}
r_\eps(s)=s+\eps.
\end{equation}
For comparison, we also consider the family
\begin{equation}
\label{eq:piecewise-regularization-family}
r_{\eps,m}(s)
=
\begin{cases}
s+\eps-\dfrac{\eps}{m+1}
\left(1-\dfrac{s}{\eps}\right)^{m+1},
&0\le s\le\eps,\\[2ex]
s+\eps,
&s>\eps,
\end{cases}
\qquad m=1,2.
\end{equation}
These functions belong to $C^m([0,\infty))$ and satisfy
Definition~\ref{def:admissible-regularization}. 

\paragraph{Potential regularization.}

The kinetic regularization removes the singularity at $\rho=0$, but
the derivative of the linear potential term $V\rho$ at zero density
is $V$. In the discrete problem, this can activate the positivity
constraint and reduce the regularity relevant to the Fourier
approximation. We therefore regularize the potential term near zero
density by replacing $\rho$ with $p_\sig(\rho)$, where 
\begin{equation}
\label{eq:potential-regularizer}
p_\sig(s)
=
\sqrt{s^2+\sig^2}-\sig,
\qquad s\ge0.
\end{equation}
A square-root smoothing of this type was considered in
\cite{BaoRuan2019} as a numerical enhancement. Here $\sig$ is treated
as an independent regularization scale and is analyzed together with
the kinetic regularization. We have
\begin{equation}
\label{eq:potential-regularizer-derivatives}
p_\sig(0)=p_\sig'(0)=0,
\qquad
p_\sig''(s)
=
\frac{\sig^2}{(s^2+\sig^2)^{3/2}}
\ge0.
\end{equation}
Thus $Vp_\sig$ is convex for $V\ge0$. 
The condition
$p_\sig'(0)=0$ removes the nonzero derivative of the potential term
at zero density.
Moreover, $p_\sig(s)\to s$ as $\sig\to0^+$. For the convergence
analysis below, it is sufficient to take
$\sig=\sig(\eps)\to0$ as $\eps\to0^+$.

Combining the two regularizations, we obtain
\begin{equation}
\label{eq:joint-regularized-energy}
\E_{\eps,\sig}(\rho)
=
\int_{\R^d}
\left[
F_\eps(\rho,\nabla\rho)
+V(x)p_\sig(\rho)
+\frac{\beta}{2}\rho^2
+\frac{\delta}{2}|\nabla\rho|^2
\right]\dd x,
\qquad
\rho\in\A.
\end{equation}

The convex structure of the original density problem is retained by
the regularized energy.

\begin{proposition}[Convexity of the regularized energy]
\label{prop:joint-convexity}
Assume that $r_\eps$ is admissible in the sense of Definition \ref{def:admissible-regularization}, $V\ge0$, $\beta\ge0$,
$\delta>0$, and $\sig>0$.
Then $\E_{\eps,\sig}$ \eqref{eq:joint-regularized-energy} is strictly convex on $\A$ \eqref{eq:admissible-density}.
If $\beta>0$, it is strongly convex with respect to the $L^2$ norm.
\end{proposition}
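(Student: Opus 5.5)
The plan is to handle the integrand term by term, using the structure of Proposition~\ref{prop:density-convexity}. Only the kinetic and potential terms change.

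\textbf{Convexity.} First, $\A$ is convex: nonnegativity, unit mass, $H^1$ membership and $\int V\rho<\infty$ are all preserved under convex combinations. For the kinetic term, Lemma~\ref{lem:regularized-kinetic-convexity} gives joint convexity of $F_\eps(s,q)$ on $[0,\infty)\times\R^d$. Since $\rho\mapsto(\rho,\nabla\rho)$ is linear, we get the pointwise inequality
\[
F_\eps(\rho_\theta,\nabla\rho_\theta)\le \theta F_\eps(\rho_1,\nabla\rho_1)+(1-\theta)F_\eps(\rho_2,\nabla\rho_2),
\qquad \rho_\theta=\theta\rho_1+(1-\theta)\rho_2 .
\]
Integrating it shows the kinetic functional is convex. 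For the potential term, \eqref{eq:potential-regularizer-derivatives} gives $p_\sig''\ge0$, so $p_\sig$ is convex on $[0,\infty)$. Since $V\ge0$, the map $s\mapsto V(x)p_\sig(s)$ is convex for each $x$, and integration preserves this. Finiteness is ensured by $0\le p_\sig(s)\le s$, so $\int Vp_\sig(\rho)\le\int V\rho<\infty$. The $\beta$ term is convex because $\beta\ge0$. Summing, $\E_{\eps,\sig}$ is convex on $\A$.

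\textbf{Strict convexity.} This comes from the $\delta$ term exactly as in Proposition~\ref{prop:density-convexity}. For $\rho_1\neq\rho_2$ in $\A$ and $\theta\in(0,1)$ we have the identity
\[
\theta|\nabla\rho_1|^2+(1-\theta)|\nabla\rho_2|^2-|\nabla\rho_\theta|^2=\theta(1-\theta)|\nabla(\rho_1-\rho_2)|^2 .
\]
If strict inequality failed, every term would have to be an equality. In particular $\nabla(\rho_1-\rho_2)=0$ a.e., so $\rho_1-\rho_2$ is constant. It lies in $L^1(\R^d)$, since both densities have unit mass, so the constant is zero. This contradicts $\rho_1\ne\rho_2$.

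\textbf{Strong convexity for $\beta>0$.} Apply the same quadratic identity to the $\beta$ term:
\[
\frac{\beta}{2}\Bigl[\theta\norm{\rho_1}_{L^2}^2+(1-\theta)\norm{\rho_2}_{L^2}^2-\norm{\rho_\theta}_{L^2}^2\Bigr]=\frac{\beta}{2}\theta(1-\theta)\norm{\rho_1-\rho_2}_{L^2}^2 .
\]
The remaining terms are convex, so
\[
\E_{\eps,\sig}(\rho_\theta)\le\theta\E_{\eps,\sig}(\rho_1)+(1-\theta)\E_{\eps,\sig}(\rho_2)-\frac{\beta}{2}\theta(1-\theta)\norm{\rho_1-\rho_2}_{L^2}^2 .
\]
This is strong convexity with modulus $\beta$ in $L^2$. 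The $\delta$ term additionally contributes a $\frac{\delta}{2}\norm{\nabla(\rho_1-\rho_2)}_{L^2}^2$ gain, but that is not needed here.

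The main technical care concerns infinite values. If $\E_{\eps,\sig}(\rho_i)=+\infty$ for some $i$, the inequalities hold trivially. If both energies are finite, every term is finite (each term is nonnegative since $p_\sig\ge0$), so the subtractions above are legitimate. Strictness is then only needed on the effective domain, which is the convention for strict convexity. Since $r_\eps>0$, the kinetic integrand is finite whenever $\nabla\rho\in L^2$ with $r_\eps$ bounded below. The concavity of $r_\eps$ and $r_\eps(0)>0$ give $r_\eps\ge r_\eps(0)$ because $r_\eps'\ge0$. Hence $F_\eps(\rho,\nabla\rho)\le|\nabla\rho|^2/(8r_\eps(0))$, and the energy is in fact finite on all of $\A$, which removes the issue entirely.
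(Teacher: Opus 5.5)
Your proof is correct and follows essentially the same term-by-term argument as the paper: Lemma~\ref{lem:regularized-kinetic-convexity} for the kinetic term, convexity of $p_\sig$ with $V\ge0$ for the potential, strictness from the $\delta$ term via the constant-difference argument, and strong convexity from the $\beta$ term. You also check that $\E_{\eps,\sig}$ is finite on all of $\A$, using $r_\eps\ge r_\eps(0)>0$ (which needs only $r_\eps'\ge0$, not concavity) and $0\le p_\sig(s)\le s$; the paper leaves this detail implicit.
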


\begin{proof}
By Lemma~\ref{lem:regularized-kinetic-convexity}, the kinetic
term is convex. The potential term is convex because
$p_\sig$ is convex and $V\ge0$, while the remaining terms are convex.
Strict convexity follows from the $\delta$ term as in
Proposition~\ref{prop:density-convexity}. If $\beta>0$, the term
$\frac{\beta}{2}\|\rho\|_{L^2}^2$ gives strong convexity in $L^2$.
\end{proof}

\subsection{\texorpdfstring{$\Gamma$}{Gamma}-convergence of the regularized energies}

To formulate the Fourier discretization on a bounded domain, we introduce
the periodic box
\[
\mathcal D_L=[-L,L)^d.
\]
On $\mathcal D_L$, we study the $\Gamma$-convergence of the regularized
energies as the regularization parameters vanish, with respect to the
strong $L^2(\mathcal D_L)$ topology \cite{Braides2002}.

Let $H^1_{\rm p}(\mathcal D_L)$ denote the periodic Sobolev space on
$\mathcal D_L$.
Define
\begin{equation}
\label{eq:admissible-density-box}
\A_L
=
\left\{
\rho\in H^1_{\rm p}(\mathcal D_L):
\rho\ge0\ {\rm a.e.},
\quad
\int_{\mathcal D_L}\rho\,\dd x=1
\right\}.
\end{equation}
For $\rho\in\A_L$, define
\begin{equation}
\label{eq:energies-DL}
\begin{aligned}
&\E_{\eps,\sig,L}(\rho)
=
\int_{\mathcal D_L}
\left[
F_\eps(\rho,\nabla\rho)
+Vp_\sig(\rho)
+\frac{\beta}{2}\rho^2
+\frac{\delta}{2}|\nabla\rho|^2
\right]\dd x, \\
&\E_{0,L}(\rho)
:=\E_{0,0,L}(\rho) =
\int_{\mathcal D_L}
\left[
F_0(\rho,\nabla\rho)
+V\rho
+\frac{\beta}{2}\rho^2
+\frac{\delta}{2}|\nabla\rho|^2
\right]\dd x. 
\end{aligned}
\end{equation}

The potential regularization is uniformly small on $\mathcal D_L$. Since
\begin{equation}
\label{eq:potential-energy-defect}
0\le s-p_\sig(s)\le\sig,
\qquad s\ge0,
\end{equation}
we have
\[
\left|
\int_{\mathcal D_L}
V\,[p_\sig(\rho)-\rho]\,\dd x
\right|
\le
\sig\int_{\mathcal D_L}V\,\dd x.
\]
Thus the potential contribution is a uniformly vanishing perturbation
as $\sig\to0$, and the main convergence argument can be carried out
for the kinetic regularization.

\begin{theorem}[$\Gamma$-convergence of the joint regularization]
\label{thm:gamma-joint}
Let $V\in C(\mathcal D_L)$ with $V\ge0$, $\beta\ge0$, and $\delta>0$.
Let $\{r_\eps\}$ be admissible in the sense of
Definition~\ref{def:admissible-regularization}, and let
$\sig(\eps)>0$ satisfy
$\sig(\eps)\to0$ as $\eps\to0^+$.
Extend $\E_{\eps,\sig(\eps),L}$ and $\E_{0,L}$ to $L^2(\mathcal D_L)$
by setting them equal to $+\infty$ outside $\A_L$.
Then
\[
\E_{\eps,\sig(\eps),L}
\xrightarrow{\Gamma}
\E_{0,L}
\qquad
\text{in the strong }L^2(\mathcal D_L)\text{ topology}.
\]
Moreover, the family
$\{\E_{\eps,\sig(\eps),L}\}_{\eps>0}$
is equicoercive in $L^2(\mathcal D_L)$.
\end{theorem}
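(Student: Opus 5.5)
The plan is to prove equicoercivity first, then the two $\Gamma$-convergence inequalities (liminf and recovery sequence), handling the potential regularization throughout as a uniformly vanishing perturbation via \eqref{eq:potential-energy-defect}.

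\emph{Equicoercivity.} Suppose $\E_{\eps,\sig(\eps),L}(\rho)\le C$. Then $\rho\in\A_L$. All integrands are nonnegative, since $p_\sig\ge0$ and $V\ge0$. Hence $\frac{\delta}{2}\|\nabla\rho\|_{L^2}^2\le C$. Mass normalization gives $\int_{\mathcal D_L}\rho\,\dd x=1$, so the Poincaré--Wirtinger inequality on the periodic box bounds $\|\rho\|_{H^1}$ uniformly in $\eps$. Rellich compactness of $H^1_{\rm p}(\mathcal D_L)\hookrightarrow L^2(\mathcal D_L)$ then gives relative compactness of sublevel sets in strong $L^2$, uniformly in $\eps$.

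\emph{Liminf inequality.} Let $\rho_\eps\to\rho$ in $L^2$ with $\liminf\E_{\eps}(\rho_\eps)<\infty$, and pass to a subsequence realizing the liminf with bounded energies. By the previous step, $\rho_\eps\rightharpoonup\rho$ weakly in $H^1$. The constraints are preserved in the limit: $\rho\ge0$ a.e. by strong $L^2$ convergence, and the mass is preserved because $L^2$ convergence on a bounded domain implies $L^1$ convergence. Thus $\rho\in\A_L$. The terms are treated separately:
\begin{itemize}
\item The $\beta$ term converges.
\item The $\delta$ term is weakly lower semicontinuous.
\item For the potential term, \eqref{eq:potential-energy-defect} gives $\int Vp_{\sig}(\rho_\eps)\ge\int V\rho_\eps-\sig\int V$, and $\int V\rho_\eps\to\int V\rho$ since $V\in C(\mathcal D_L)$ is bounded.
\end{itemize}
The kinetic term is the main obstacle, because the integrand depends on $\eps$. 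I would use monotonicity in $\eps$: for fixed $\eps_0$ and all $\eps\le\eps_0$, $r_\eps\le r_{\eps_0}$, so $F_\eps(\rho_\eps,\nabla\rho_\eps)\ge F_{\eps_0}(\rho_\eps,\nabla\rho_\eps)$. The integrand $F_{\eps_0}$ is continuous, nonnegative, and jointly convex by Lemma~\ref{lem:regularized-kinetic-convexity}. Moreover $\rho_\eps\to\rho$ strongly and $\nabla\rho_\eps\rightharpoonup\nabla\rho$ weakly in $L^2$. The standard lower semicontinuity theorem for convex integrands (Ioffe's theorem, or Fatou combined with a Mazur convex-combination argument) therefore yields
\[
\liminf_{\eps\to0}\int F_\eps(\rho_\eps,\nabla\rho_\eps)\ge\int F_{\eps_0}(\rho,\nabla\rho).
\]
Letting $\eps_0\to0$, we have $F_{\eps_0}(\rho,\nabla\rho)\uparrow F_0(\rho,\nabla\rho)$ pointwise. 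This holds where $\rho>0$ because $r_{\eps_0}(\rho)\downarrow\rho$, and where $\rho=0$ because $|q|^2/(8r_{\eps_0}(0))\to+\infty$ or $0$ according to whether $q\ne0$ or $q=0$, using $r_{\eps_0}(0)\to0$. Monotone convergence then gives $\int F_0(\rho,\nabla\rho)$. (In fact $\nabla\rho=0$ a.e. on $\{\rho=0\}$ for $H^1$ functions, so the case $q\neq 0$ is harmless.) Summing the four terms gives the liminf inequality.

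\emph{Recovery sequence.} For $\rho$ with $\E_{0,L}(\rho)<\infty$ I take the constant sequence $\rho_\eps=\rho$. Since $s\le r_\eps(s)$, we have $F_\eps(\rho,\nabla\rho)\le F_0(\rho,\nabla\rho)$, which is integrable. Also $F_\eps\to F_0$ pointwise: on $\{\rho=0\}$ we have $\nabla\rho=0$ a.e., so both sides vanish there. Dominated convergence therefore gives convergence of the kinetic term. The potential term converges by the uniform bound \eqref{eq:potential-energy-defect} with $\sig(\eps)\to0$, and the remaining terms do not depend on $\eps$. If $\E_{0,L}(\rho)=+\infty$, the limsup inequality is trivial. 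The liminf inequality together with this recovery sequence establishes the $\Gamma$-convergence.
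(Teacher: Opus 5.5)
Your proof is correct and follows essentially the same route as the paper. Both arguments use the comparison $F_{\eps}\ge F_{\eps_0}$ from the monotonicity of $r_\eps$, weak lower semicontinuity of the convex integral functional, monotone convergence as $\eps_0\downarrow0$, the constant recovery sequence, and the $\delta$-term $H^1$ bound with Rellich compactness for equicoercivity. The differences are cosmetic: you absorb the potential smoothing into each inequality via \eqref{eq:potential-energy-defect}, whereas the paper first proves $\Gamma$-convergence of the kinetic-only functional and then appeals to stability under a uniformly vanishing perturbation; and you use dominated rather than monotone convergence in the recovery step.
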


\begin{proof}
We first consider the kinetic regularization alone and keep the
potential term unchanged. Define
\[
\widetilde\E_{\eps,L}(\rho)
=
\int_{\mathcal D_L}
\left[
F_\eps(\rho,\nabla\rho)
+V\rho
+\frac{\beta}{2}\rho^2
+\frac{\delta}{2}|\nabla\rho|^2
\right]\dd x.
\]

We first prove the liminf inequality. Let $\eps_n\to0$ and
$\rho_n\to\rho$ strongly in $L^2(\mathcal D_L)$. If
\[
\liminf_{n\to\infty}
\widetilde\E_{\eps_n,L}(\rho_n)=+\infty,
\]
there is nothing to prove. Otherwise, after passing to a subsequence,
we may assume that the energies are uniformly bounded. Since
$\delta>0$, the gradient term together with the fixed mass gives a
uniform $H^1(\mathcal D_L)$ bound. Hence, after extracting a further
subsequence,
\[
\rho_n\rightharpoonup\rho
\qquad
\text{weakly in }H^1(\mathcal D_L).
\]

Fix $\bar\eps>0$. For all sufficiently large $n$,
$\eps_n\le\bar\eps$. The ordering in
\eqref{eq:regularization-admissibility-limit} therefore gives
\[
F_{\eps_n}(s,q)
\ge
F_{\bar\eps}(s,q).
\]
For fixed $\bar\eps$, the integrand $F_{\bar\eps}$ is continuous and
jointly convex in $(s,q)$. The associated integral functional is
therefore weakly lower semicontinuous on $H^1(\mathcal D_L)$. Together with
the remaining terms, this yields
\[
\liminf_{n\to\infty}
\widetilde\E_{\eps_n,L}(\rho_n)
\ge
\widetilde\E_{\bar\eps,L}(\rho).
\]
Finally, let $\bar\eps\downarrow0$. Since
$r_{\bar\eps}(s)\downarrow s$, the nonnegative kinetic integrands
increase pointwise to $F_0$. The monotone convergence theorem then
gives
\[
\liminf_{n\to\infty}
\widetilde\E_{\eps_n,L}(\rho_n)
\ge
\E_{0,L}(\rho).
\]

For the recovery sequence, let $\rho\in\A_L$ satisfy
$\E_{0,L}(\rho)<\infty$ and take the constant sequence
$\rho_\eps=\rho$. The same monotone convergence argument gives
\[
\widetilde\E_{\eps,L}(\rho)
\longrightarrow
\E_{0,L}(\rho)
\qquad
\text{as }\eps\to0^+.
\]
Hence
\[
\widetilde\E_{\eps,L}
\xrightarrow{\Gamma}
\E_{0,L}
\qquad
\text{in strong }L^2(\mathcal D_L).
\]

It remains to restore the potential regularization.
By \eqref{eq:potential-energy-defect},
\[
\sup_{\rho\in\A_L}
\left|
\E_{\eps,\sig(\eps),L}(\rho)
-
\widetilde\E_{\eps,L}(\rho)
\right|
\le
\sig(\eps)\int_{\mathcal D_L}V\,\dd x
\longrightarrow0.
\]
Thus the potential smoothing is a uniformly vanishing perturbation
and does not change the $\Gamma$-limit. 

The equicoercivity follows from the $\delta$ term and the fixed mass,
which give a uniform $H^1(\mathcal D_L)$ bound, together with the compact
embedding $H^1(\mathcal D_L)\hookrightarrow L^2(\mathcal D_L)$.
\end{proof}

The $\Gamma$-convergence result gives the corresponding convergence
of minimum values and minimizers.

\begin{corollary}[Convergence of minimizers]
\label{cor:min-conv}
Under the assumptions of Theorem~\ref{thm:gamma-joint}, let
$\rho_{\eps,L}$ and $\rho_{0,L}$ denote the minimizers of
$\E_{\eps,\sig(\eps),L}$ and $\E_{0,L}$ on $\A_L$, respectively.
Then
\begin{equation}
\label{eq:minimizer-convergence}
\min_{\rho\in\A_L}\E_{\eps,\sig(\eps),L}(\rho)
\longrightarrow
\min_{\rho\in\A_L}\E_{0,L}(\rho),
\qquad
\rho_{\eps,L}
\longrightarrow
\rho_{0,L}
\quad\text{strongly in }L^2(\mathcal D_L).
\end{equation}
\end{corollary}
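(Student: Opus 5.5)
The plan is to apply the fundamental theorem of $\Gamma$-convergence (see \cite{Braides2002}). Theorem~\ref{thm:gamma-joint} supplies both $\Gamma$-convergence and equicoercivity in strong $L^2(\mathcal D_L)$. We then add uniqueness of minimizers to upgrade subsequential convergence to convergence of the whole family. Before that, two preliminary facts are needed.

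\emph{Step 1: existence and uniqueness of minimizers.} For each fixed $\eps$, $\E_{\eps,\sig(\eps),L}$ is strictly convex on the convex set $\A_L$. This follows from Proposition~\ref{prop:joint-convexity} restricted to the box; on $\mathcal D_L$ a constant difference of two densities with equal mass is zero. The functional is also coercive in $H^1_{\rm p}$ through the $\delta$ term and the fixed mass. It is weakly lower semicontinuous as well: this is the argument used in the liminf part of Theorem~\ref{thm:gamma-joint} with $\eps$ fixed, together with Fatou's lemma for $Vp_\sig(\rho)$, which is continuous in $\rho$. Finally, $\A_L$ is weakly closed in $H^1_{\rm p}$, since positivity and mass pass to $L^2$ limits. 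The direct method therefore gives a minimizer $\rho_{\eps,L}$, and strict convexity makes it unique.

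For $\E_{0,L}$ the argument is the same. The lower semicontinuity is exactly the liminf inequality of Theorem~\ref{thm:gamma-joint} applied with fixed $\eps_n$ replaced by the limit functional, or equivalently the monotone limit of the lower semicontinuous functionals $\widetilde\E_{\bar\eps,L}$. Strict convexity comes from Proposition~\ref{prop:density-convexity} adapted to $\mathcal D_L$, which again gives uniqueness of $\rho_{0,L}$.

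We also need $\inf\E_{0,L}<\infty$. The constant density $\rho\equiv(2L)^{-d}$ has $\nabla\rho=0$, so $F_0=0$ and its energy is finite. This matters because it ensures the limit problem is nontrivial.

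\emph{Step 2: convergence of minimum values.} By the recovery sequence, which is the constant sequence $\rho_{0,L}$,
\[
\limsup_{\eps\to0}\min\E_{\eps,\sig(\eps),L}\le \lim_{\eps\to0}\E_{\eps,\sig(\eps),L}(\rho_{0,L})=\E_{0,L}(\rho_{0,L}).
\]
In particular the minima are bounded uniformly in $\eps$. Now take any sequence $\eps_n\to0$. By equicoercivity, the minimizers $\rho_{\eps_n,L}$ have a subsequence converging strongly in $L^2$ to some $\bar\rho$. The liminf inequality gives
\[
\E_{0,L}(\bar\rho)\le\liminf\E_{\eps_n,\sig(\eps_n),L}(\rho_{\eps_n,L})\le\limsup\le\E_{0,L}(\rho_{0,L}).
\]
Hence $\bar\rho$ is a minimizer of $\E_{0,L}$, and all the inequalities are equalities along this subsequence.

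\emph{Step 3: full convergence.} By uniqueness from Step 1, $\bar\rho=\rho_{0,L}$. Since every sequence $\eps_n\to0$ has a subsequence along which the minimizers converge to the same limit $\rho_{0,L}$ and the minima converge to $\min\E_{0,L}$, the whole family converges, which is \eqref{eq:minimizer-convergence}.

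The main obstacle is Step 1 for the limit functional: we must justify that $\E_{0,L}$ attains its minimum and that this minimizer is unique, because the singular integrand $F_0$ needs care. The plan is to obtain existence for free from the $\Gamma$-convergence argument of Step 2, since any cluster point of the regularized minimizers is a minimizer of $\E_{0,L}$. Uniqueness then follows from the strict convexity supplied by the $\delta$ term, with no extra regularity needed.
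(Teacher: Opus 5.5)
Your proposal is correct and follows essentially the paper's route: existence by the direct method, uniqueness from strict convexity via the $\delta$ term and the mass constraint, then $\Gamma$-convergence with equicoercivity for the minima and subsequential limits, upgraded to convergence of the whole family by uniqueness. You also fill in points the paper leaves implicit, namely finiteness of $\inf\E_{0,L}$ via the constant density and lower semicontinuity of $\E_{0,L}$ as a monotone supremum; one small caveat is that your closing idea of getting existence of $\rho_{0,L}$ ``for free'' from Step 2 would be circular as written, because Step 2 uses $\rho_{0,L}$ as the recovery competitor, so either use an arbitrary finite-energy competitor there or simply rely on the direct-method existence from Step 1.
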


\begin{proof}
We first show that the minimizers exist and are unique.
Existence follows from the direct method. Indeed, the $\delta$ term
together with the fixed mass gives the required $H^1(\mathcal D_L)$
bound for a minimizing sequence, while weak lower semicontinuity and
compactness on the bounded periodic domain yield a minimizer in
$\A_L$.

For uniqueness, the convexity arguments of
Propositions~\ref{prop:density-convexity} and~\ref{prop:joint-convexity}
remain valid on $\mathcal D_L$. Since all terms in the energy are
convex, equality in the convexity inequality implies equality for the
$\delta$ term. Hence two minimizers must have the same gradient and
therefore differ by a constant. In the whole-space case this constant
is excluded by integrability, whereas on the periodic domain it is
excluded by the fixed-mass constraint. Thus the constant is zero and
the minimizer is unique.

Theorem~\ref{thm:gamma-joint} and equicoercivity then give the
convergence of the minimum values and the subsequential convergence of
$\rho_{\eps,L}$. Every convergent subsequence has a minimizer of
$\E_{0,L}$ as its limit. Since this minimizer is unique, all such
limits coincide with $\rho_{0,L}$, and therefore the whole family
converges.
\end{proof}

\section{Fourier pseudospectral discretization}
\label{sec:ps}

We discretize the regularized density problem on a periodic
computational domain by a Fourier pseudospectral method. We introduce
the discrete energy and its derivatives, together with the positive
mass-conservative projection used in the constrained optimization.

We present the one-dimensional formulation on $\mathcal D_L=[-L,L)$.
The multidimensional case follows by the standard tensor-product
extension. Let $N$ be even,
$x_j=-L+jh$, $j=0,\ldots,N-1$, and $h=2L/N$. For
$u,v\in\R^N$, define the discrete inner product and norm by
\begin{equation}
\label{eq:h-inner}
\inner{u}{v}_h
=
h\sum_{j=0}^{N-1}u_jv_j,
\qquad
\norm{u}_h^2=\inner{u}{u}_h.
\end{equation}

\subsection{Discrete energy, gradient and Hessian structure}

Let $D_N$ denote the real Fourier pseudospectral first-derivative
operator. For even $N$, we set the derivative of the Fourier mode
$k=N/2$ to zero, so that $D_N$ maps real grid vectors to real grid
vectors and satisfies
\begin{equation}
\label{eq:skew-adjoint}
D_N^T=-D_N
\end{equation}
with respect to the inner product \eqref{eq:h-inner}. The action of
$D_N$ is evaluated by FFTs.

For a grid density $\rho$, let $q=D_N\rho$. The discrete regularized
energy is
\begin{equation}
\label{eq:discrete-energy}
E_{\eps,\sig,N}(\rho)
=
h\sum_{j=0}^{N-1}
\left[
\frac{q_j^2}{8r_\eps(\rho_j)}
+V_jp_\sig(\rho_j)
+\frac{\beta}{2}\rho_j^2
+\frac{\delta}{2}q_j^2
\right],
\end{equation}
subject to the discrete positivity and mass constraints
\begin{equation}
\label{eq:discrete-feasible}
\CN
=
\left\{
\rho\in\R^N:
\rho_j\ge0,
\quad
h\sum_{j=0}^{N-1}\rho_j=1
\right\}.
\end{equation}
Proposition~\ref{prop:discrete-convex} shows that the discrete energy
\eqref{eq:discrete-energy} retains the convexity of the regularized
problem.

\begin{proposition}[Discrete convexity]
\label{prop:discrete-convex}
Let $r_\eps$ be admissible in the sense of
Definition~\ref{def:admissible-regularization}. Let $\sig>0$ and assume
$V_j\ge0$, $\beta\ge0$, and $\delta>0$.
Then $E_{\eps,\sig,N}$ is convex on $\CN$ and admits a minimizer.
If $\beta>0$, it is strongly convex with respect to
$\norm{\cdot}_h$, and the minimizer is unique.
\end{proposition}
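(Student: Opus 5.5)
Since $D_N$ is linear, we split $E_{\eps,\sig,N}$ into a sum of convex functions of $\rho$, one per term, and then use compactness of $\CN$.

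\emph{Convexity.} Write $E_{\eps,\sig,N}(\rho)=h\sum_j g_j(\rho_j,(D_N\rho)_j)$ with
\[
g_j(s,q)=\frac{q^2}{8r_\eps(s)}+V_jp_\sig(s)+\frac{\beta}{2}s^2+\frac{\delta}{2}q^2 .
\]
By Lemma~\ref{lem:regularized-kinetic-convexity} with $d=1$, the map $(s,q)\mapsto q^2/(8r_\eps(s))$ is jointly convex on $[0,\infty)\times\R$. The map $s\mapsto V_jp_\sig(s)$ is convex by \eqref{eq:potential-regularizer-derivatives} and $V_j\ge0$, and the two quadratic terms are convex since $\beta\ge0$ and $\delta>0$. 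Each $g_j$ is therefore jointly convex. The map $\rho\mapsto(\rho_j,(D_N\rho)_j)$ is linear, and a convex function composed with a linear map is convex, so each summand is convex on $\{\rho\ge0\}\supset\CN$. A nonnegative combination of convex functions is convex, and $\CN$ is convex, so $E_{\eps,\sig,N}$ is convex on $\CN$.

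\emph{Existence.} $\CN$ is closed, bounded (because $0\le\rho_j\le 1/h$), and nonempty, so it is compact in $\R^N$. Since $r_\eps>0$ and $r_\eps\in C^1$, the energy is continuous on a neighborhood of $\CN$ in $[0,\infty)^N$. Weierstrass's theorem then gives a minimizer.

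\emph{Strong convexity and uniqueness.} The only step that needs care is identifying the strongly convex part; I expect it to be routine. Write $E_{\eps,\sig,N}=G+\frac{\beta}{2}\norm{\rho}_h^2$, where $G$ is the sum of the kinetic, potential and $\delta$ terms, which is convex by the argument above. For $\theta\in[0,1]$ the identity
\[
\norm{\theta u+(1-\theta)v}_h^2=\theta\norm{u}_h^2+(1-\theta)\norm{v}_h^2-\theta(1-\theta)\norm{u-v}_h^2
\]
gives
\[
E(\theta u+(1-\theta)v)\le\theta E(u)+(1-\theta)E(v)-\frac{\beta}{2}\theta(1-\theta)\norm{u-v}_h^2 .
\]
This is strong convexity with modulus $\beta$ with respect to $\norm{\cdot}_h$. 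If $u\ne v$ were two minimizers, taking $\theta=\tfrac12$ would give a strictly smaller energy at a feasible point, a contradiction. So the minimizer is unique when $\beta>0$.

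We do not claim uniqueness for $\beta=0$. In the discrete setting, $D_N\rho$ does not determine $\rho$ up to a constant: because of the zeroed Nyquist mode, the kernel of $D_N$ contains the alternating vector in addition to constants. The continuous strict-convexity argument therefore does not transfer directly, which is why the statement asserts uniqueness only for $\beta>0$.
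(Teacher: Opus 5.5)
Your proof is correct and follows the same route as the paper: the kinetic term is convex via Lemma~\ref{lem:regularized-kinetic-convexity} composed with the linear map $\rho\mapsto(\rho_j,(D_N\rho)_j)$, the remaining terms are convex, a minimizer exists because $\CN$ is compact and the energy is continuous, and for $\beta>0$ the term $\frac{\beta}{2}\norm{\rho}_h^2$ gives strong convexity and hence uniqueness. Beyond what the paper writes, you give the explicit modulus-$\beta$ inequality and correctly observe that the zeroed Nyquist mode puts the alternating vector in $\ker D_N$, which explains why uniqueness is asserted only for $\beta>0$.
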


\begin{proof}
By Lemma \ref{lem:regularized-kinetic-convexity} and the linearity of
\[
\rho\mapsto \bigl(\rho_j,(D_N\rho)_j\bigr),
\]
the discrete kinetic term is convex. The remaining terms inherit
the convexity properties established in Proposition \ref{prop:joint-convexity}.
Since $\CN$ is compact and $E_{\eps,\sig,N}$ is continuous, a
minimizer exists. If $\beta>0$, the term
$\frac{\beta}{2}\|\rho\|_h^2$ gives strong convexity and hence
uniqueness.
\end{proof}

Differentiating \eqref{eq:discrete-energy} gives the discrete gradient.
Let $q=D_N\rho$. With all nonlinear operations understood
pointwise, the gradient with respect to the inner product
\eqref{eq:h-inner} is characterized by
\begin{equation}
\label{eq:discrete-gradient}
\begin{aligned}
&G_{\eps,\sig,N}(\rho)
=
\frac14D_N^T
\left(
\frac{q}{r_\eps(\rho)}
\right)
-\frac18
\frac{q^2r_\eps'(\rho)}
     {r_\eps(\rho)^2}
+V\,p_\sig'(\rho)
+\beta\rho
+\delta D_N^TD_N\rho,\\
&\left.
\frac{\dd}{\dd t}
E_{\eps,\sig,N}(\rho+t\eta)
\right|_{t=0}
=
\inner{G_{\eps,\sig,N}(\rho)}{\eta}_h .
\end{aligned}
\end{equation}

At points where $r_\eps$ is twice differentiable, define
\begin{equation}
\label{eq:kinetic-hessian-factorization}
\begin{aligned}
B
&=
D_N-
\operatorname{diag}
\left(
\frac{q\,r_\eps'(\rho)}
     {r_\eps(\rho)}
\right),
&
W
&=
\operatorname{diag}
\left(
\frac{1}{4r_\eps(\rho)}
\right).
\end{aligned}
\end{equation}
The Hessian contribution of the regularized kinetic term admits the
factorization
\begin{equation}
\label{eq:kinetic-hessian}
H_{\rm kin}
=
B^TWB
+
\operatorname{diag}
\left(
-\frac{q^2r_\eps''(\rho)}
       {8r_\eps(\rho)^2}
\right).
\end{equation}
Since $r_\eps>0$ and $r_\eps''\le0$ wherever the second derivative
exists, both terms in \eqref{eq:kinetic-hessian} are positive
semidefinite. The Hessian of the full discrete energy therefore has
the decomposition
\begin{equation}
\label{eq:full-hessian-spd-form}
H
=
H_{\rm kin}
+\delta D_N^TD_N
+\beta I
+\operatorname{diag}\!\bigl(Vp_\sig''(\rho)\bigr).
\end{equation}
Hence $H$ is symmetric positive semidefinite. If $\beta>0$, it is
positive definite. This structure will be used in the Newton refinement in
Section~\ref{subsec:Newton-PCG}.

For the shifted denominator $r_\eps(s)=s+\eps$, which is used in the
Newton refinement below, $r_\eps'=1$ and $r_\eps''=0$. The diagonal
correction in \eqref{eq:kinetic-hessian} then vanishes. The
corresponding matrix-free Hessian action is recorded in
Appendix~\ref{app:newton-implementation}.

\subsection{Positive mass-conservative projection}

The constrained optimization requires a projection that preserves
nonnegativity and mass. Let $\SN$ denote the trigonometric
interpolation space associated with the Fourier grid, and define
\begin{equation}
\label{eq:SNplus}
\SNP
=
\left\{
v_N\in\SN:
v_N(x_j)\ge0,\quad j=0,\ldots,N-1
\right\},
\end{equation}
where, for $v_N\in\SN$, we use the same discrete norm notation
$\norm{v_N}_h^2
=
h\sum_{j=0}^{N-1}|v_N(x_j)|^2$.
For $z_N\in\SN$ satisfying $\int_{\mathcal D_L}z_N\,\dd x \ge 0$, consider the positive mass-conservative
pseudospectral projection \cite{lin2024sinum}
\begin{equation}
\label{eq:spectral-positive-proj}
\operatorname*{argmin}_{g_N\in\SNP}
\frac12\norm{g_N-z_N}_h^2
\quad
\text{subject to}
\quad
\int_{\mathcal D_L}g_N\,\dd x
=
\int_{\mathcal D_L}z_N\,\dd x .
\end{equation}
The following proposition gives its nodal form.

\begin{proposition}[Nodal form of the positive mass-conservative projection]
\label{prop:proj-equiv}
Let $z=(z_j)_{j=0}^{N-1}\in\R^N$, where $z_j=z_N(x_j)$.
Then \eqref{eq:spectral-positive-proj} is equivalent to
\[
\operatorname*{argmin}_{y\in\R^N}
\frac12\norm{y-z}_h^2
\]
subject to
\[
y_j\ge0,
\qquad
h\sum_{j=0}^{N-1}y_j
=
\int_{\mathcal D_L}z_N\,\dd x .
\]
\end{proposition}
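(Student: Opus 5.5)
The plan is to exhibit a bijection between $\SN$ and $\R^N$ under which the objective, the positivity constraint and the mass constraint of \eqref{eq:spectral-positive-proj} become exactly those of the nodal problem.

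The key tool is the interpolation map $I_N:\R^N\to\SN$, which sends $y$ to the unique trigonometric interpolant $g_N=I_Ny$ with $g_N(x_j)=y_j$. I will use two facts about it.

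\textbf{Step 1: $I_N$ is a linear bijection.} For even $N$ the space $\SN$ is the standard $N$-dimensional real trigonometric interpolation space, with the Nyquist mode $k=N/2$ included as a cosine term. Evaluation at the $N$ nodes is therefore a linear isomorphism $\SN\to\R^N$, and $I_N$ is its inverse. Consequently, every $g_N\in\SN$ corresponds to exactly one nodal vector $y$ with $y_j=g_N(x_j)$, and $z_N=I_Nz$.

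\textbf{Step 2: exactness of the trapezoidal rule on $\SN$.} For every $g_N\in\SN$,
\[
\int_{\mathcal D_L}g_N\,\dd x=h\sum_{j=0}^{N-1}g_N(x_j).
\]
To prove this, I would expand $g_N$ in the modes $e^{i\pi k(x+L)/L}$ with $|k|\le N/2$. The integral of each mode with $k\ne0$ vanishes. For the corresponding discrete sum, set $\omega=e^{2\pi ik/N}$; since $x_j+L=jh$, the sum equals $h\sum_{j=0}^{N-1}\omega^j$, which is a geometric sum. It vanishes whenever $\omega\neq 1$, that is, whenever $k\not\equiv0\pmod N$, and this holds for all $0<|k|\le N/2$. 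In particular the Nyquist mode $k=\pm N/2$ gives $\sum_j(-1)^j=0$ because $N$ is even. For $k=0$ both sides equal $2L\cdot\hat g_0=Nh\,\hat g_0$, so the identity holds mode by mode.

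\textbf{Step 3: translating the problem.} Substituting $g_N=I_Ny$ into \eqref{eq:spectral-positive-proj} transforms each piece as follows.
\begin{itemize}
\item The objective becomes $\frac12\norm{y-z}_h^2$, directly from the definition of $\norm{\cdot}_h$ on $\SN$, since that norm only involves nodal values.
\item The condition $g_N\in\SNP$ is, by \eqref{eq:SNplus}, exactly $y_j\ge0$ for all $j$.
\item By Step 2, the mass constraint becomes $h\sum_j y_j=\int_{\mathcal D_L}z_N\,\dd x$.
\end{itemize}
Since $I_N$ is a bijection, the feasible sets correspond one to one with equal objective values. Hence $g_N^*$ minimizes \eqref{eq:spectral-positive-proj} if and only if its nodal vector minimizes the nodal problem, and existence and uniqueness of the minimizer transfer between the two formulations.

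It is also worth noting that the nodal problem is the Euclidean projection, up to the factor $h$, onto a nonempty closed convex set. The set is a scaled simplex, nonempty because $\int_{\mathcal D_L} z_N\,\dd x\ge0$. The minimizer therefore exists and is unique, and so the same holds for \eqref{eq:spectral-positive-proj}.

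The main obstacle I expect is Step 2, specifically the convention for the Nyquist mode in $\SN$. I need to confirm that $\SN$ contains only modes $|k|\le N/2$, treated so that $\SN$ has dimension exactly $N$; then the trapezoidal rule is exact and the interpolation map is bijective. If $\SN$ were defined with a larger set of modes, aliasing would break both facts, so I would state this convention explicitly.
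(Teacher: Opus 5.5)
Your proof is correct and follows the same route as the paper. The discrete norm only involves nodal values, membership in $\SNP$ is a nodal condition by definition, and exactness of the trapezoidal rule on $\SN$ turns the integral mass constraint into the discrete one. You go further than the paper in two places: you make explicit the bijection between $\SN$ and $\R^N$ that the paper uses tacitly, and you verify trapezoidal exactness mode by mode, including the Nyquist mode, where the paper simply cites it.
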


\begin{proof}
By the definition of the discrete norm,
\[
\norm{g_N-z_N}_h^2
=
h\sum_{j=0}^{N-1}
|g_N(x_j)-z_N(x_j)|^2.
\]
Moreover, the trapezoidal rule is exact for trigonometric
polynomials, so
\[
\int_{\mathcal D_L}g_N\,\dd x
=
h\sum_{j=0}^{N-1}g_N(x_j).
\]
Hence the objective and constraints in
\eqref{eq:spectral-positive-proj} coincide with those of the nodal
problem.
\end{proof}

Proposition~\ref{prop:proj-equiv} shows that the pseudospectral
projection can be carried out entirely in terms of the nodal values.
For the unit-mass constraint used in the optimization, we therefore
define the nodal projection onto $\CN$ by
\begin{equation}
\label{eq:nodal-projection}
P_{\CN}(z)
=
\operatorname*{argmin}_{y\in\CN}
\frac12\norm{y-z}_h^2,
\qquad z\in\R^N.
\end{equation}
It is given componentwise by
\begin{equation}
\label{eq:simplex-form}
\bigl[P_{\CN}(z)\bigr]_j
=
\max\{z_j-\lambda,0\},
\qquad
h\sum_{j=0}^{N-1}
\bigl[P_{\CN}(z)\bigr]_j
=
1,
\end{equation}
where $\lambda$ is uniquely determined by the mass constraint.

\section{Two-stage optimization method}
\label{sec:opt}

In this section, we solve the discrete convex minimization problem
using a two-stage strategy. We first apply an accelerated proximal
method to reduce the energy efficiently, and then refine the resulting
state through the KKT system to reduce the stationarity residual.

The discrete ground-state problem is
\begin{equation}
\label{eq:discrete-min}
\min_{\rho\in\CN}E_{\eps,\sig,N}(\rho).
\end{equation}
Since the objective and $\CN$ are convex, every KKT point is a global
minimizer.

To assess stationarity in both stages, for a fixed $\tau_r>0$ define
the projected-gradient residual
\begin{equation}
\label{eq:pg-map}
\mathcal G_{\tau_r}(\rho)
=
\frac{1}{\tau_r}
\left[
\rho
-
P_{\CN}
\left(
\rho-\tau_rG_{\eps,\sig,N}(\rho)
\right)
\right].
\end{equation}
For any feasible $\rho$, the optimality condition for
\eqref{eq:discrete-min} is equivalent to
$\mathcal G_{\tau_r}(\rho)=0$. We use
$\norm{\mathcal G_{\tau_r}(\rho)}_h$ as the stationarity measure.
For the numerical experiments, we take $\tau_r=1$.

\subsection{Potential-proximal FISTA}
\label{subsec:potential_FISTA}

FISTA is an accelerated proximal-gradient method for composite convex
optimization \cite{BeckTeboulle2009}. We apply it to the discrete
problem \eqref{eq:discrete-min} by splitting the energy into a smooth part and a proximal
part.
As shown in  \eqref{eq:potential-regularizer-derivatives}, the potential regularization has curvature $p_\sig''(0)=1/\sig$, which
becomes large as $\sig$ decreases. 
We therefore include the regularized
potential together with the positivity and mass constraints in the
proximal part. Thus we use the splitting
\begin{equation}
\label{eq:f-plus-g}
E_{\eps,\sig,N}(\rho)
=
f_{\eps,N}(\rho)+g_{\sig,N}(\rho),
\end{equation}
where
\begin{equation}
\label{eq:smooth-part}
f_{\eps,N}(\rho)
=
h\sum_j
\left[
\frac{(D_N\rho)_j^2}{8r_\eps(\rho_j)}
+\frac{\beta}{2}\rho_j^2
+\frac{\delta}{2}(D_N\rho)_j^2
\right]
\end{equation}
is the smooth part, and
\begin{equation}
\label{eq:prox-part}
g_{\sig,N}(\rho)
=
h\sum_j V_jp_\sig(\rho_j)
+
I_{\CN}(\rho)
\end{equation}
is the proximal part. Here $I_{\CN}$ denotes the indicator function
of $\CN$.
The gradient of $f_{\eps,N}$ with respect to the discrete inner
product \eqref{eq:h-inner} is
\begin{equation}
\label{eq:smooth-gradient}
\nabla_h f_{\eps,N}(\rho)
=
\frac14D_N^T
\left(
\frac{D_N\rho}{r_\eps(\rho)}
\right)
-\frac18
\frac{(D_N\rho)^2r_\eps'(\rho)}
     {r_\eps(\rho)^2}
+\beta\rho
+\delta D_N^TD_N\rho,
\end{equation}
where the nonlinear operations are understood pointwise. 
For the regularizations  \eqref{eq:shift-denominator}--\eqref{eq:piecewise-regularization-family}, \(r_\eps'\) is Lipschitz continuous, and hence \(\nabla_h f_{\eps,N}\) is Lipschitz continuous on \(\mathcal C_N\). 

We use the Chambolle--Dossal variant of FISTA with backtracking
\cite{BeckTeboulle2009, ChambolleDossal2015}.
Given two consecutive iterates $\rho^{k-1},\rho^k\in\CN$, one FISTA
iteration consists of an extrapolation to $y^k$, a forward step to
$z^k$, and a proximal update to $\rho^{k+1}\in\CN$.

The extrapolated state is
\begin{equation}
\label{eq:fista-extrapolation}
y^k
=
\rho^k+\theta_k(\rho^k-\rho^{k-1}),
\end{equation}
where $\theta_k$ is the Chambolle--Dossal extrapolation parameter.
In the computations,
\(
\theta_k=(k-1)/(k+4).
\)
The extrapolation preserves the mass constraint. Since
$f_{\eps,N}$ is defined for nonnegative densities, the extrapolation
is accepted only if $y^k\ge0$. Otherwise, it is restarted by setting
\[
y^k=\rho^k.
\]

For a trial step size $\tau_k>0$, the forward step is
\begin{equation}
\label{eq:fista-forward}
z^k
=
y^k-\tau_k\nabla_h f_{\eps,N}(y^k).
\end{equation}
The proximal update is
\begin{equation}
\label{eq:potential-prox}
\rho^{k+1}
=
\operatorname*{argmin}_{\rho\in\CN}
\left\{
\frac12\norm{\rho-z^k}_h^2
+
\tau_k h\sum_jV_jp_\sig(\rho_j)
\right\}.
\end{equation}
The positivity and mass constraints are therefore enforced in the
proximal step.

The trial step size is accepted when the standard majorization
condition
\begin{equation}
\label{eq:fista-backtracking}
\begin{aligned}
f_{\eps,N}(\rho^{k+1})
\le\;&
f_{\eps,N}(y^k)
+
\inner{\nabla_h f_{\eps,N}(y^k)}
       {\rho^{k+1}-y^k}_h 
+
\frac{1}{2\tau_k}
\norm{\rho^{k+1}-y^k}_h^2
\end{aligned}
\end{equation}
is satisfied. Otherwise, $\tau_k$ is reduced and the
forward--proximal step is repeated. We take $\tau_1=1$, reuse the
previously accepted step size at each subsequent iteration, and halve
it during backtracking.

The proximal problem \eqref{eq:potential-prox} can be solved through
a single scalar mass multiplier. For a fixed $\lambda$, each positive
component of $\rho^{k+1}$ satisfies
\begin{equation}
\label{eq:potential-prox-node}
s-z_j^k
+\tau_kV_jp_\sig'(s)
+\lambda
=
0,
\qquad s>0.
\end{equation}
The left-hand side is strictly increasing in $s$, since
\(
1+\tau_kV_jp_\sig''(s)>0.
\)
Hence the positive solution, when it exists, is unique. The $j$th
component is zero when $z_j^k\le\lambda$ and otherwise is given by
the unique positive solution of
\eqref{eq:potential-prox-node}. The multiplier $\lambda$ is determined
by the mass constraint
\[
h\sum_j\rho_j^{k+1}=1.
\]
Thus the proximal update consists of independent scalar nodal equations coupled only through the mass constraint. The nodal equations are solved by Newton's method for a given \(\lambda\), while \(\lambda\) is determined from the mass constraint by a one-dimensional root search.

For the unregularized linear potential, $p(s)=s$,
\eqref{eq:potential-prox} reduces to
\[
\rho^{k+1}
=
P_{\CN}(z^k-\tau_kV).
\]
Hence the potential-proximal update is a nonlinear extension of the
nodal projection \eqref{eq:nodal-projection}.

Once the energy has stabilized and the projected residual \eqref{eq:pg-map} is
sufficiently small, the iterate is passed to the Newton refinement.

\subsection{Newton refinement}
\label{subsec:Newton-PCG}

After the FISTA stage, we use a Newton refinement to further reduce
the stationarity residual \eqref{eq:pg-map}. The corresponding KKT conditions of
\eqref{eq:discrete-min} are 
\begin{equation}
\label{eq:kkt}
\begin{aligned}
& G_{\eps,\sig,N}(\rho)+\lambda\bm1-\mu=0,
\qquad
h\bm1^T\rho=1,\\
& \rho\ge0,\qquad
\mu\ge0,\qquad
\rho_j\mu_j=0.
\end{aligned}
\end{equation}
When the solution is interior, the inequality multiplier vanishes and
\eqref{eq:kkt} reduces to
\begin{equation}
\label{eq:interior-kkt}
G_{\eps,\sig,N}(\rho)+\lambda\bm1=0,
\qquad
h\bm1^T\rho=1.
\end{equation}

Starting from the FISTA output, each interior Newton iteration
computes a correction from the linearized KKT system and applies a
damped update. In the strongly convex regime $\beta>0$, the Hessian
is symmetric positive definite by
\eqref{eq:full-hessian-spd-form}. For the current iterate
$(\rho^k,\lambda^k)$, define
\[
r^k
=
G_{\eps,\sig,N}(\rho^k)+\lambda^k\bm1,
\qquad
r_m^k
=
h\bm1^T\rho^k-1
\]
as the stationarity and mass residuals, respectively.
Let $H^k=H(\rho^k)$ denote the Hessian of
$E_{\eps,\sig,N}$ at $\rho^k$ \eqref{eq:full-hessian-spd-form}.
The Newton correction
$(d^k,d_\lambda^k)$ satisfies
\[
H^kd^k+d_\lambda^k\bm1=-r^k,
\qquad
h\bm1^Td^k=-r_m^k.
\]

Rather than solving the indefinite KKT system directly, we use a
Schur complement. Let
\[
H^kz^k=r^k,
\qquad
H^kw^k=\bm1.
\]
Then
\begin{equation}
\label{eq:schur-two-solves}
d_\lambda^k
=
\frac{r_m^k-h\bm1^Tz^k}
     {h\bm1^Tw^k},
\qquad
d^k
=
-z^k-d_\lambda^kw^k.
\end{equation}
Thus each interior Newton step requires two linear systems with the
same symmetric positive definite Hessian. We solve these systems by
the preconditioned conjugate-gradient (PCG) method.

The correction is applied with a damping factor
$\alpha_k\in(0,1]$:
\[
\rho^{k+1}
=
\rho^k+\alpha_kd^k,
\qquad
\lambda^{k+1}
=
\lambda^k+\alpha_kd_\lambda^k.
\]
The damping factor is chosen to preserve positivity, together with a
residual line search to globalize the Newton iteration.
If active constraints are
encountered, a standard primal--dual active-set treatment can be used 
\cite{Ulbrich2011}. 
In the numerical experiments, the Newton refinement is terminated when the projected-gradient residual reaches the prescribed tolerance, with a maximum of 20 Newton iterations.

For the PCG solves, we use a sparse variable-coefficient
finite-difference approximation of the Fourier pseudospectral
Hessian as a symmetric positive definite preconditioner. Its
construction is given in
Appendix~\ref{app:newton-implementation}.

\section{Numerical experiments}
\label{sec:num}

In this section, we examine the spatial accuracy, regularization
effects, and computational performance of the positive-conservative
Fourier optimization (PCFO) method, and present representative
two-dimensional examples.

Unless otherwise stated, we take $\beta=\delta=10$, impose the
unit-mass constraint, use the shifted denominator
$r_\eps(\rho)=\rho+\eps$, and adopt the potential regularization
\eqref{eq:potential-regularizer}.
\subsection{Spatial accuracy}
\label{subsec:num-mesh}

We first examine the spatial accuracy of the Fourier discretization
with fixed regularization parameters. 
To separate the spatial error from
the regularization error, we work on the periodic interval
$\mathcal D_L=[-L,L)$ with $L=32$ and fix
$\eps=\sig=10^{-2}$. 

We take the harmonic potential $V_H(x)=x^2/2$. Its periodic extension
is continuous but not smoothly periodic, since
$V_H'(-L)\ne V_H'(L)$, which limits the Fourier accuracy at high
resolution. For the spatial-convergence test, we therefore use
\begin{equation}
\label{eq:periodic-harmonic}
V_L^{\rm p}(x)
=
\bigl(1-\chi_L(x)\bigr)\frac{x^2}{2}
+
\chi_L(x)\frac{L^2}{2},
\end{equation}
where
\[
\chi_L(x)
=
S\left(\frac{|x|-0.75L}{0.15L}\right),
\qquad
S(t)
=
\begin{cases}
0, & t\le0,\\
\displaystyle
\frac{\exp(-1/t)}{\exp(-1/t)+\exp(-1/(1-t))},
& 0<t<1,\\
1, & t\ge1.
\end{cases}
\]
Thus $V_L^{\rm p}$ agrees with the harmonic potential in the
interior and is constant near the periodic boundary.

Let $\rho_N$ and $\rho_{\rm ref}$ denote the trigonometric
interpolants of the numerical solutions on the $N$-point grid and
the finest comparison grid, respectively. We define
\begin{equation}
\label{eq:spatial-errors}
\begin{aligned}
e_{\rho,2}(N)
&=
\norm{\rho_N-\rho_{\rm ref}}_{L^2(\mathcal D_L)},
\qquad
e_{\rho,\infty}(N)
&=
\max_{0\le j<N}
\left|
\rho_N(x_j)-\rho_{\rm ref}(x_j)
\right|.
\end{aligned}
\end{equation}
The energy error is
\begin{equation}
\label{eq:spatial-energy-error}
e_E(N)
=
\abs{
E_{\eps,\sig,N}(\rho_N)
-
E_{\eps,\sig,N_{\rm ref}}(\rho_{\rm ref})
}.
\end{equation}

\begin{table}[H]
\centering
\small
\caption{
Fourier mesh refinement for $\eps=\sig=10^{-2}$ using the harmonic
trap with a $C^\infty$ periodic continuation \eqref{eq:periodic-harmonic}.
The comparison state is computed with $N_{\rm ref}=8192$.
}
\label{tab:mesh}
\begin{tabular}{ccccccc}
\toprule
$N$
&
$e_{\rho,2}$
&
rate
&
$e_{\rho,\infty}$
&
rate
&
$e_E$
&
rate
\\
\midrule
32
& $2.519\times10^{-2}$
& --
& $7.577\times10^{-3}$
& --
& $5.831\times10^{-3}$
& --
\\
64
& $3.394\times10^{-3}$
& $2.89$
& $1.766\times10^{-3}$
& $2.10$
& $2.077\times10^{-3}$
& $1.49$
\\
128
& $1.190\times10^{-4}$
& $4.83$
& $9.460\times10^{-5}$
& $4.22$
& $5.597\times10^{-6}$
& $8.54$
\\
256
& $6.227\times10^{-7}$
& $7.58$
& $2.841\times10^{-7}$
& $8.38$
& $3.770\times10^{-9}$
& $10.54$
\\
512
& $6.415\times10^{-11}$
& $13.24$
& $3.402\times10^{-11}$
& $13.03$
& $4.174\times10^{-14}$
& 16.46
\\
\bottomrule
\end{tabular}
\end{table}

Table~\ref{tab:mesh} reports the errors relative to the
$N_{\rm ref}=8192$ comparison state. The state and energy errors
decrease rapidly, with increasing empirical orders, until the
numerical floor is reached, which is consistent with spectral-type
spatial convergence for fixed regularization parameters.

For comparison, we repeat the refinement with the unmodified harmonic
potential $V_H=x^2/2$ on the periodic domain. The two calculations are
nearly indistinguishable on coarse and moderate meshes. 
At higher resolutions, the direct harmonic calculation develops an
algebraic error floor, with the observed order approaching two,
consistent with the lack of periodic smoothness of $V_H$.
This error
floor is removed by the $C^\infty$ far-field continuation, as shown
in Figure \ref{fig:mesh-convergence}.

\begin{figure}[t]
\centering
\includegraphics[width=0.55\textwidth]
{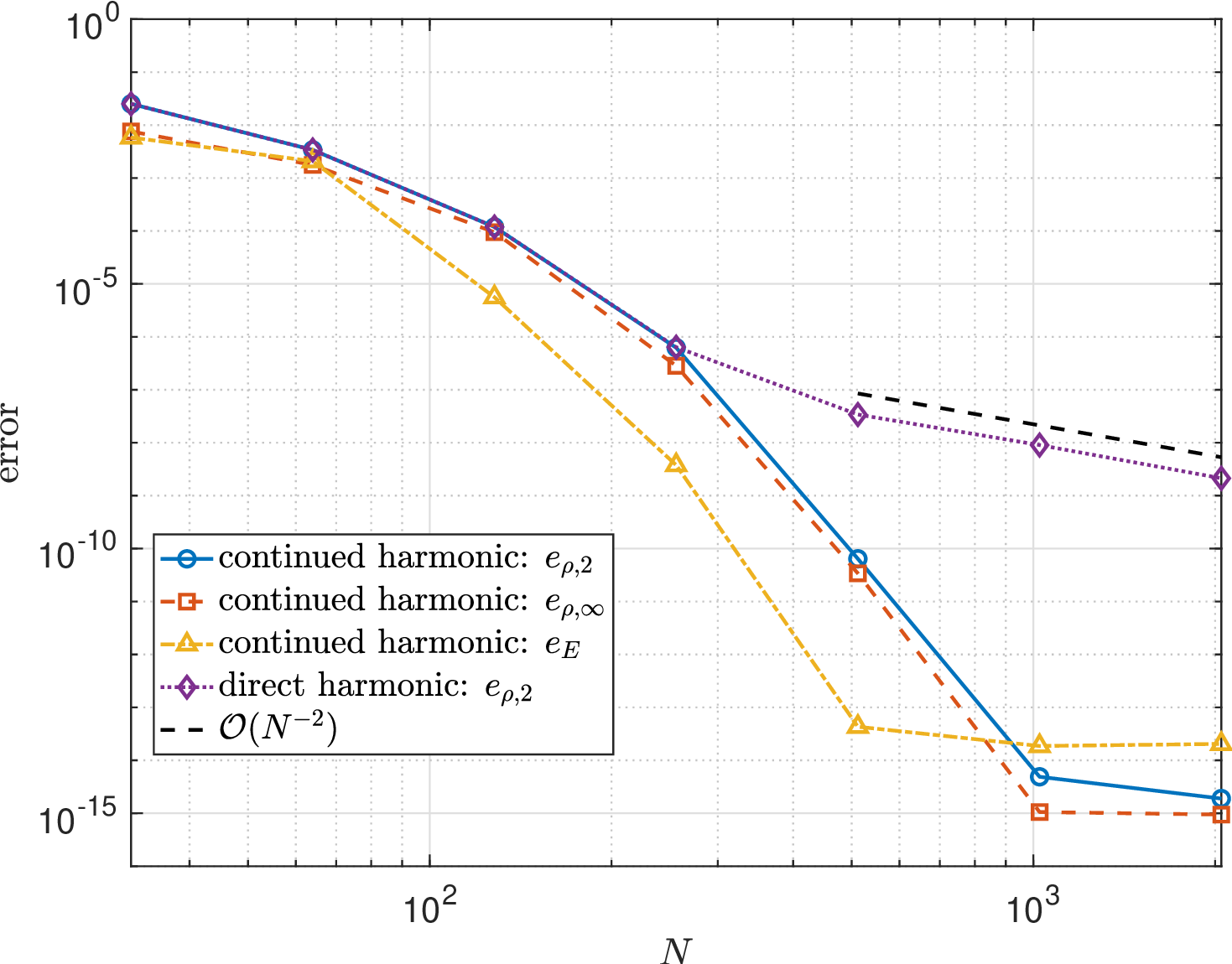}
\caption{
Fourier spatial convergence with fixed regularization parameters.
``Continued harmonic'' refers to the $C^\infty$ far-field
continuation \eqref{eq:periodic-harmonic}, and ``direct harmonic'' to
$V_H(x)=x^2/2$ on the periodic computational domain.
}
\label{fig:mesh-convergence}
\end{figure}

\subsection{Regularization effects}
\label{subsec:num-regularization}

The regularized model contains two parameters associated with the
kinetic and potential terms. We first examine the effect of the
potential regularization parameter $\sig$ at fixed $\eps$, including
its influence on Fourier resolution. We then compare different
regularizations of the density kinetic term as $\eps\to0$.

\paragraph{Potential regularization.}

We fix $\eps=10^{-2}$, $L=32$, and $N=4096$, and take
$\sig=\eps^p$ with $p=1,2,3,4$. Let $\rho_{\eps,\sig}$ denote the
minimizer with the regularized potential, and let $\rho_{\eps,0}$
denote the minimizer obtained with 
the original potential. To quantify the perturbation introduced
by $p_\sig$, we define
\begin{equation}
\label{eq:num-potential-bias}
\begin{aligned}
B_E(\sig)
&=
E_{\eps,0}(\rho_{\eps,\sig})
-
E_{\eps,0}(\rho_{\eps,0}),
\quad
B_\rho(\sig)
&=
\norm{
\rho_{\eps,\sig}-\rho_{\eps,0}
}_{L^2(\mathcal D_L)}.
\end{aligned}
\end{equation}
Here $E_{\eps,0}$ denotes the energy with the same denominator
$r_\eps$ and the original potential contribution $V\rho$.

Table~\ref{tab:sigma-sweep} shows that both perturbations decrease
approximately linearly with $\sig$ over the tested range. The minimum
density exhibits the same scaling for small $\sig$, indicating that
the positive low-density background introduced by the potential
regularization vanishes together with $\sig$.

\begin{table}[H]
\centering
\small
\caption{
Effect of the potential regularization scale at
$\eps=10^{-2}$, $L=32$, and $N=4096$.
The quantities $B_E(\sig)$ and $B_\rho(\sig)$ are defined in
\eqref{eq:num-potential-bias}.
}
\label{tab:sigma-sweep}
\begin{tabular}{cccc}
\toprule
$\sig$
&
$B_E(\sig)$
&
$B_\rho(\sig)$
&
$\min\rho$
\\
\midrule
$10^{-2}$
& $1.776\times10^{0}$
& $2.548\times10^{-2}$
& $6.449\times10^{-5}$
\\
$10^{-4}$
& $1.849\times10^{-2}$
& $2.900\times10^{-4}$
& $6.740\times10^{-7}$
\\
$10^{-6}$
& $1.849\times10^{-4}$
& $2.719\times10^{-6}$
& $6.743\times10^{-9}$
\\
$10^{-8}$
& $1.849\times10^{-6}$
& $1.993\times10^{-8}$
& $6.744\times10^{-11}$
\\
\bottomrule
\end{tabular}
\end{table}

To examine the transition introduced by the potential regularization,
we consider
\begin{equation}
\label{def:p_prime}
p_\sig'\bigl(\rho_{\eps,\sig}(x)\bigr)
=
\frac{\rho_{\eps,\sig}(x)}
{\sqrt{\rho_{\eps,\sig}(x)^2+\sig^2}},
\end{equation}
which measures the local slope of the regularized potential relative
to the original linear potential. It is close to one when
$\rho_{\eps,\sig}\gg\sig$ and close to zero when
$\rho_{\eps,\sig}\ll\sig$.

\begin{figure}[t]
\centering
\includegraphics[width=0.65\textwidth, trim = 0 30 0 0, clip]
{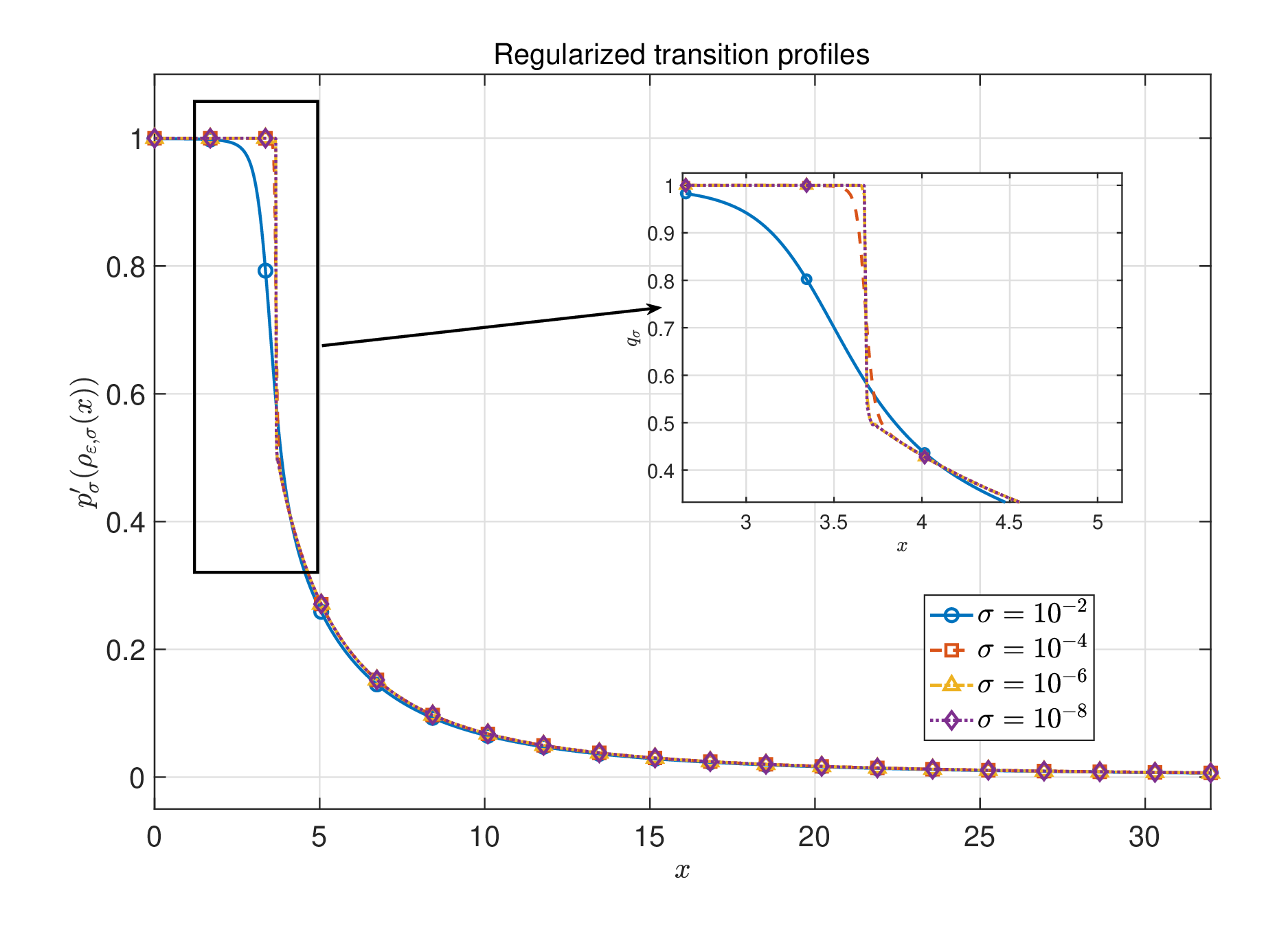}
\caption{
Profiles of
$p_\sig'(\rho_{\eps,\sig}(x))$ \eqref{def:p_prime}
for different potential-regularization scales.
The transition becomes sharper as $\sig$ decreases, and the inset
enlarges the transition region.
All computations use
$\eps=10^{-2}$, $L=32$, and $N=4096$.
}
\label{fig:regularization-sigma}
\end{figure}

Figure~\ref{fig:regularization-sigma} shows that the transition
between these two regimes becomes sharper as $\sig$ decreases.
Accordingly, the modification introduced by the potential
regularization becomes more localized near the low-density region.
The resulting smaller spatial scale requires more Fourier modes to
resolve accurately.


To examine this effect on Fourier resolution, we repeat the mesh
refinement for
$\sig=0$, $10^{-2}$, $10^{-4}$, and $10^{-6}$, while keeping
$\eps=10^{-2}$ and $L=32$. Here $\sig=0$ corresponds to the original
linear potential without potential regularization. We use the same
$C^\infty$-continued harmonic potential
\eqref{eq:periodic-harmonic} and the $N_{\rm ref}=65536$ solution as
the reference.

Figure~\ref{fig:sigma-accuracy} shows the resulting energy and state
errors. For $\sig=10^{-2}$, both errors rapidly enter the
spectral-type regime and reach their numerical floors. As $\sig$
decreases, a longer pre-asymptotic range appears before the rapid
decay becomes visible. 
In contrast, for $\sig=0$, the nonzero slope of the linear potential
at zero density can activate the positivity constraint and reduce the
regularity of the density. Consistently, spectral-type decay is not
observed over the tested resolutions, and the errors decrease much
more slowly.
This reveals a bias--resolution tradeoff in which decreasing $\sig$
reduces the regularization error but requires a finer Fourier mesh to
maintain the same spatial accuracy.

\begin{figure}[H]
\centering
\includegraphics[width=0.45\textwidth]
{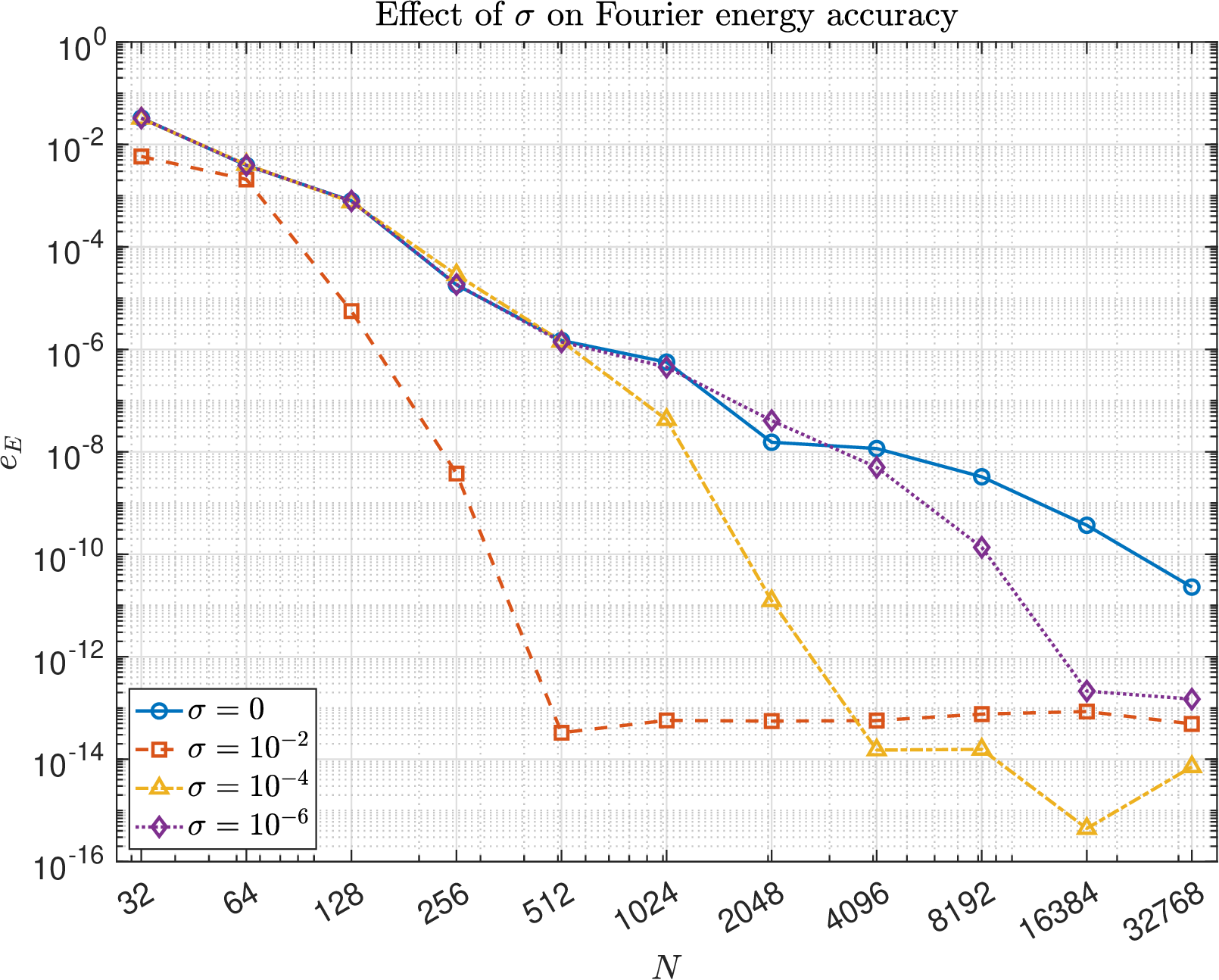}
\includegraphics[width=0.45\textwidth]
{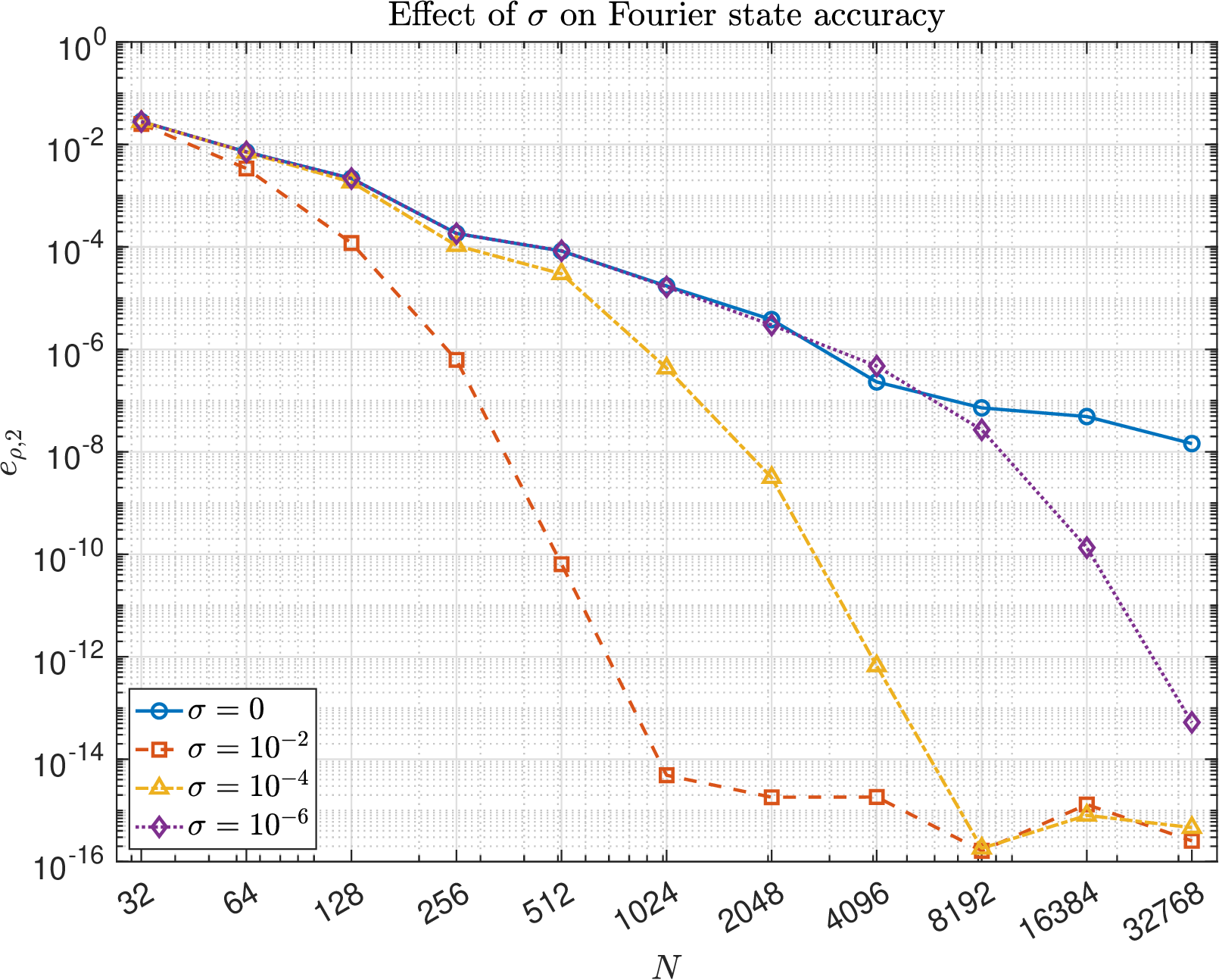}
\caption{
Effect of the potential regularization scale on Fourier accuracy.
The left panel shows the energy error and the right panel shows the
 $L^2$ state error. Each curve refines a fixed-$\sig$
problem with $\eps=10^{-2}$, $L=32$, and the harmonic-core
$C^\infty$ far-field continuation, relative to the
$N_{\rm ref}=65536$ comparison state.
}
\label{fig:sigma-accuracy}
\end{figure}

\paragraph{Regularization of the kinetic term.}

We next show the effect of the kinetic regularization by fixing the
potential regularization and the spatial discretization. We compare
the shifted denominator \eqref{eq:shift-denominator} with the
piecewise $C^1$ and $C^2$ denominators in
\eqref{eq:piecewise-regularization-family}. Throughout this test, we
take $\sig=10^{-2}$, $L=32$, and $N=512$, and use the same
$C^\infty$-continued harmonic potential. The tested values are
$\eps=10^{-1},5\times10^{-2},2\times10^{-2},10^{-2},
5\times10^{-3},2\times10^{-3},10^{-3}$.

A common reference state $\rho_{\rm ref}$ is computed with the shifted
denominator at $\eps_{\rm ref}=10^{-5}$. To compare the three
regularizations on the same basis, we measure the state error and
evaluate the energy error with the common unregularized kinetic
denominator $r_0(\rho)=\rho$. 
We define
\begin{equation}
\label{eq:kinetic-reg-errors}
\begin{aligned}
e_\rho(\eps)
&=
\norm{\rho_\eps-\rho_{\rm ref}}_{L^2(\mathcal D_L)},
\quad
e_E(\eps)
&=
\abs{
E_{0,\sig}(\rho_\eps)
-
E_{0,\sig}(\rho_{\rm ref})
}.
\end{aligned}
\end{equation}
Here $E_{0,\sig}$ denotes the energy with the original kinetic
denominator $r_0(s)=s$ and the fixed potential regularization
$p_\sig$. 

\begin{figure}[H]
\centering
\includegraphics[width=0.48\textwidth]
{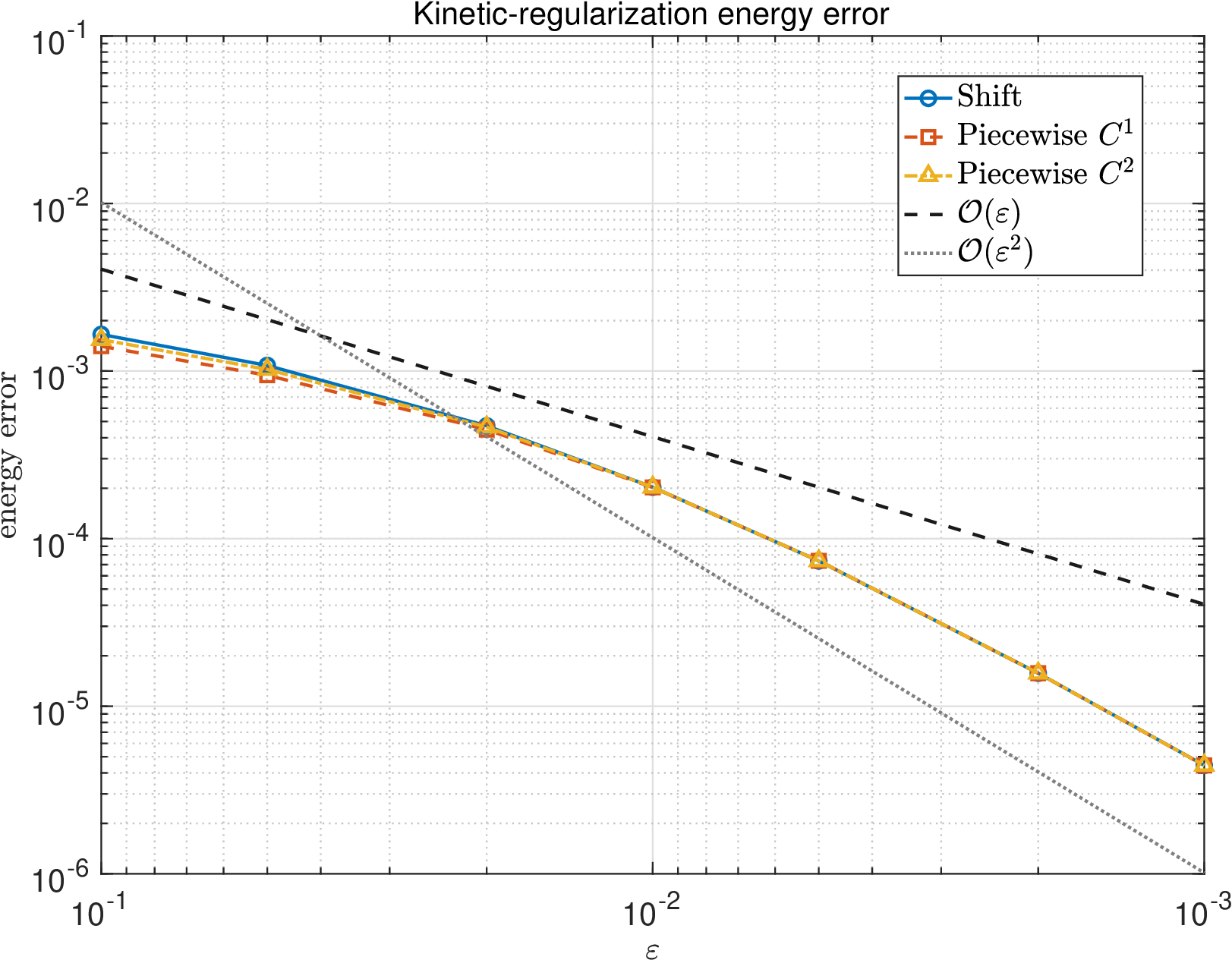}
\hfill
\includegraphics[width=0.48\textwidth]
{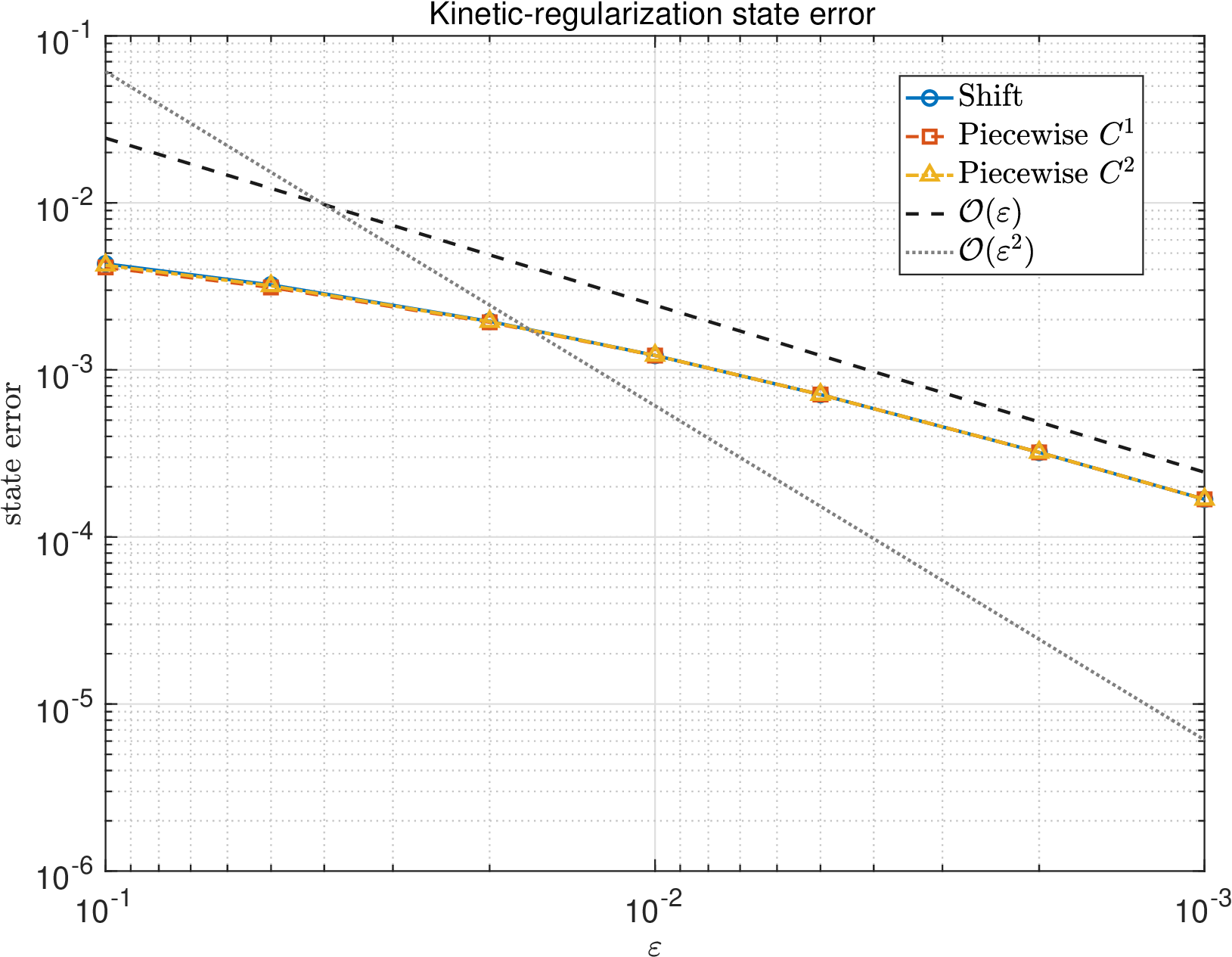}
\caption{
Effect of the kinetic regularization. The shifted denominator \eqref{eq:shift-denominator} and the
piecewise $C^1$ and $C^2$ denominators \eqref{eq:piecewise-regularization-family} are compared using the errors
defined in \eqref{eq:kinetic-reg-errors}. The left panel shows the
energy error  \(e_E(\eps)\) and the right panel the $L^2$ state error \(e_\rho(\eps)\). The dashed and
dotted lines indicate first- and second-order slopes, respectively.
All computations use $\sig=10^{-2}$, $L=32$, and $N=512$.
}
\label{fig:kinetic-regularization}
\end{figure}

Figure~\ref{fig:kinetic-regularization} shows that the three
regularizations have essentially the same asymptotic behavior as
$\eps$ decreases. 
The common-functional energy error approaches second order whereas the state error is approximately first order.
The piecewise $C^1$ denominator gives a modest reduction in error for
the larger values of $\eps$, but neither piecewise regularization
shows a clear asymptotic rate advantage over the shifted denominator.
This supports using the simpler shifted denominator in the solver
experiments.

\subsection{Efficiency tests}
\label{subsec:num-solver}

We next examine the computational performance of the two-stage
optimization strategy. Throughout this subsection, we use the shifted
denominator $r_\eps(\rho)=\rho+\eps$. We first examine the
respective roles of FISTA and the Newton refinement, and then
examine how the two regularization scales affect the computational
cost.

\paragraph{FISTA and Newton refinement.}

We consider the representative case
$\eps=10^{-2}$, $\sig=10^{-4}$, $L=32$, and $N=4096$.
All runs start from the projection of $e^{-x^2}/\sqrt{\pi}$ onto
$\CN$. 
After the first $200$ FISTA iterations, we switch to the Newton
refinement when the relative energy variation
$(E_{\max}-E_{\min})/\max\{1,|E_k|\}$ over the preceding $50$
iterations is below $10^{-12}$ and
$\norm{\mathcal G_1(\rho^k)}_h\le10^{-5}$.
Here $E_{\max}$ and $E_{\min}$ denote the maximum and minimum energy
over these $50$ iterations, and $E_k$ is the current energy.
Both conditions must hold for two consecutive iterations. 
Starting from
the same switching state, one branch continues with FISTA, while the other
proceeds with the Newton refinement. The two potential-proximal runs
therefore share the same FISTA history up to the switch.

For comparison, we also consider a smooth-potential FISTA, in which
the regularized potential is included in the smooth part instead of the proximal part. 
It minimizes the same
discrete energy and uses the same FISTA backtracking rule.

Since FISTA and Newton steps have different computational costs,
Figure~\ref{fig:solver-performance} reports the convergence histories
against cumulative wall-clock time. 
The two branches share the same FISTA time history up to the switching
point. Thereafter, the additional time of each branch is measured from
the common switching time.
The left panel shows the energy error
relative to the final two-stage solution, while the right panel shows
the projected-gradient residual $\norm{\mathcal G_1(\rho^k)}_h$. 
As shown in Figure~\ref{fig:solver-performance}, FISTA rapidly reduces
the energy and produces an accurate approximation of the ground state,
but continued FISTA decreases the projected-gradient residual only
slowly. Starting from the same switching state, the Newton refinement
reduces the residual much more rapidly.

\begin{figure}[H]
\centering
\includegraphics[width=0.48\textwidth]
{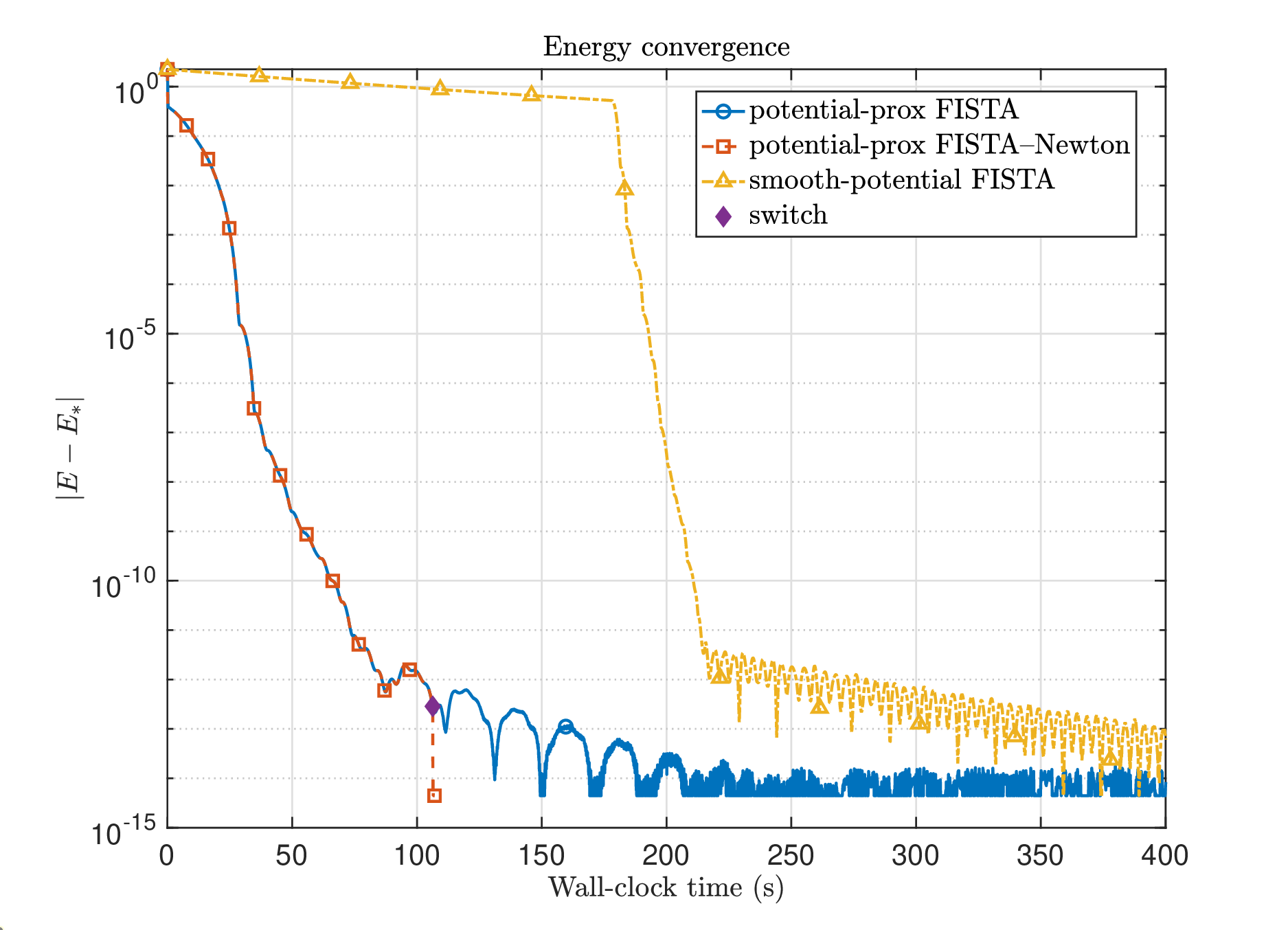}
\hfill
\includegraphics[width=0.48\textwidth]
{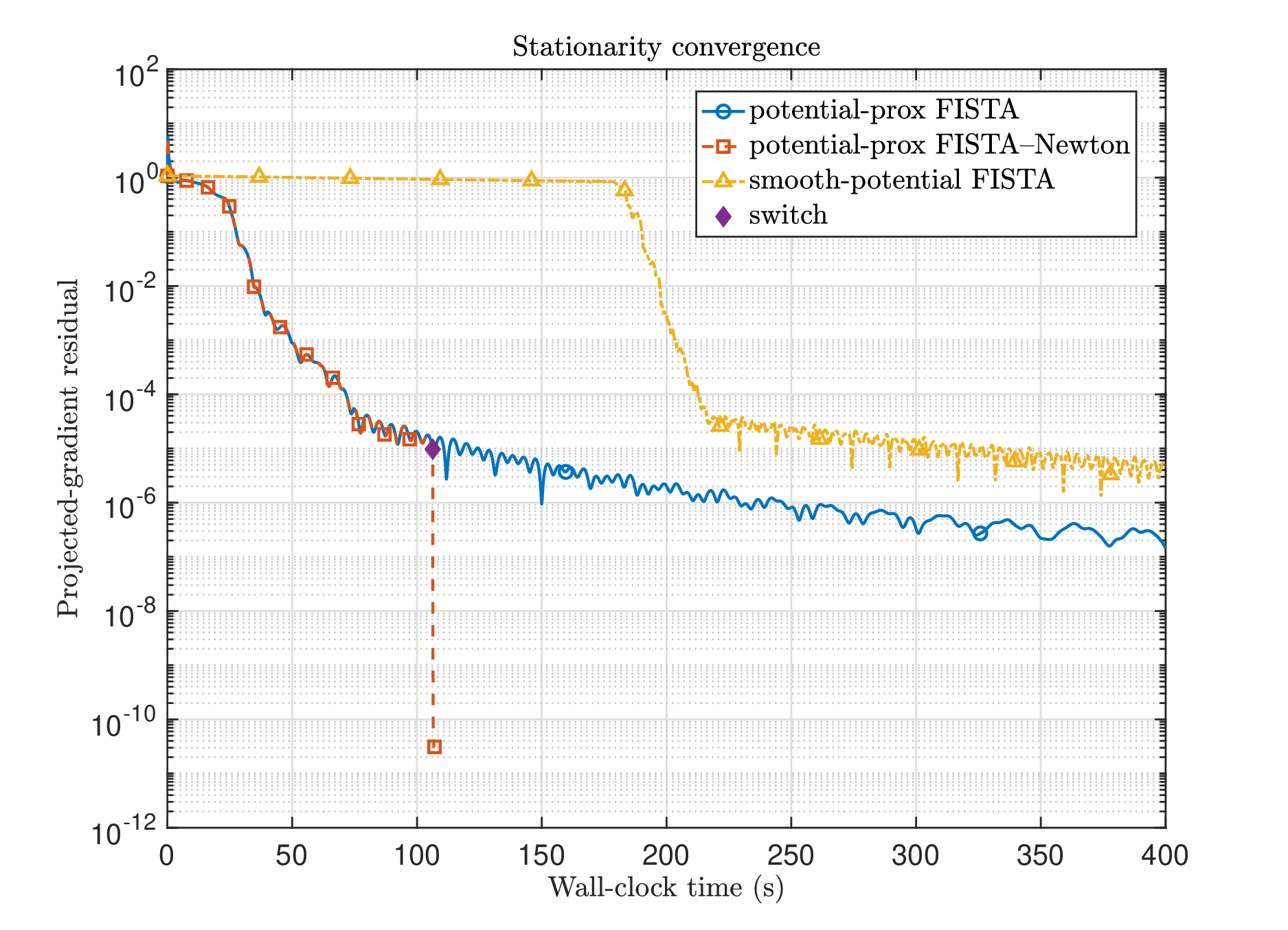}
\caption{
Convergence histories for continued FISTA and the two-stage
FISTA--Newton method with
$\eps=10^{-2}$, $\sig=10^{-4}$, $L=32$, and $N=4096$.
The left panel shows the energy error relative to the final two-stage
solution, and the right panel shows the projected-gradient residual
$\norm{\mathcal G_1(\rho^k)}_h$.
Here ``potential-proximal FISTA'' and ``smooth-potential FISTA''
refer to placing the regularized potential in the proximal and smooth
parts of the splitting, respectively.
}
\label{fig:solver-performance}
\end{figure}

\paragraph{Dependence on the regularization scales.}

We next examine how the computational cost changes as the
regularization parameters decrease. To isolate the two effects, we
first vary $\eps$ at fixed $\sig=10^{-2}$ and then vary $\sig$ at
fixed $\eps=10^{-2}$. In all cases,
$r_\eps(\rho)=\rho+\eps$.

All runs use the same Fourier resolution $N=4096$, initial density,
and solver parameters, and the results are summarized in
Table~\ref{tab:solver-regularization-cost}.
All computations were performed in MATLAB R2023a on a laptop
with an Apple M4 chip.
The reported wall-clock time includes both the FISTA and Newton stages.
The FISTA stage is limited to $20000$ iterations.
If the standard switching criterion has not been met by then, the
iterate is passed to the Newton stage provided that
$\|\mathcal G_1(\rho)\|_h\le10^{-4}$.
The asterisk in Table~\ref{tab:solver-regularization-cost} marks the
run that reached this limit.


\begin{table}[H]
\centering
\caption{
Computational cost of the two-stage solver for different
regularization scales with $r_\eps(\rho)=\rho+\eps$ and $N=4096$.
``FISTA'' and ``Newton'' denote the iteration counts in the two
stages, and the last column reports the final projected-gradient
residual $\|\mathcal G_1(\rho)\|_h$ \eqref{eq:pg-map}.
The asterisk marks a run that reached the FISTA iteration limit.
}
\label{tab:solver-regularization-cost}
\begin{tabular}{cccccc}
\toprule
$\eps$ & $\sig$
& FISTA & Newton
& time (s) & $\|\mathcal G_1(\rho)\|_h$ \\
\midrule
$10^{-2}$ & $10^{-2}$ & 3422 & 8
& 93.50 & $5.04\times10^{-11}$ \\
$10^{-3}$ & $10^{-2}$ & 9042 & 10
& 201.64 & $2.54\times10^{-10}$ \\
$10^{-4}$ & $10^{-2}$ & $20000^*$ & 14
& 519.77 & $1.15\times10^{-9}$ \\
\midrule
$10^{-2}$ & $10^{-3}$ & 2992 & 10
& 99.28 & $3.93\times10^{-11}$ \\
$10^{-2}$ & $10^{-4}$ & 3025 & 18
& 107.02 & $3.11\times10^{-11}$ \\
\bottomrule
\end{tabular}
\end{table}

Table~\ref{tab:solver-regularization-cost} shows that decreasing
$\eps$ at fixed $\sig=10^{-2}$ substantially increases the FISTA
iteration count and the total solution time. The smallest $\eps$
case reaches the prescribed FISTA iteration limit and has a somewhat
larger final residual. This increase is consistent with the larger
coefficients involving $1/r_\eps(\rho)$ in the kinetic Hessian,
particularly in low-density regions.

In contrast, decreasing $\sig$ at fixed $\eps=10^{-2}$ produces only
a modest change in the computational cost over the tested range.
Although a smaller $\sig$ sharpens the potential transition, the
potential-proximal splitting keeps the large curvature
$p_\sig''(0)=1/\sig$ out of the smooth FISTA step.

\subsection{Two-dimensional examples}
\label{subsec:num-2d}

We conclude with two examples in two dimensions. In both cases, we
use the shifted denominator $r_\eps(\rho)=\rho+\eps$, the potential
regularization $p_\sig$, and
\[
\beta=\delta=10,\qquad
\eps=10^{-2},\qquad
\sig=10^{-4}.
\]
The computations are performed on the periodic domain
$[-16,16)^2$ with $N_x=N_y=512$ Fourier collocation points. The
continuation \eqref{eq:periodic-harmonic} is applied separately in
the $x$- and $y$-directions. Thus the numerical harmonic potential is
$\gamma_x^2V_L^{\rm p}(x)+\gamma_y^2V_L^{\rm p}(y)$ and agrees with
the stated trap on $|x|,|y|\le0.75L$.

We first consider the anisotropic harmonic trap
\begin{equation}
\label{eq:2d-anisotropic-harmonic}
V_{\rm ah}(x,y)
=
\frac12
\left(
\gamma_x^2x^2+\gamma_y^2y^2
\right),
\qquad
\gamma_x=1,
\quad
\gamma_y=2.
\end{equation}
The resulting ground-state density is shown in the left panel of
Figure \ref{fig:two-dimensional-examples}. The stronger confinement in
the $y$-direction gives a density profile that is narrower in that
direction.

For the second example, we add a square optical lattice,
\begin{equation}
\label{eq:2d-optical-lattice}
V_{\rm ol}(x,y)
=
V_{\rm ah}(x,y)
+
V_0
\left[
\sin^2(kx)+\sin^2(ky)
\right],
\end{equation}
with $V_0=5$ and $k=\pi/2$. The resulting density is shown in the
right panel of Figure \ref{fig:two-dimensional-examples}. The optical
lattice introduces multiple local wells and produces the corresponding
modulation of the ground-state density.

\begin{figure}[htbp]
\centering
\includegraphics[width=0.48\textwidth]
{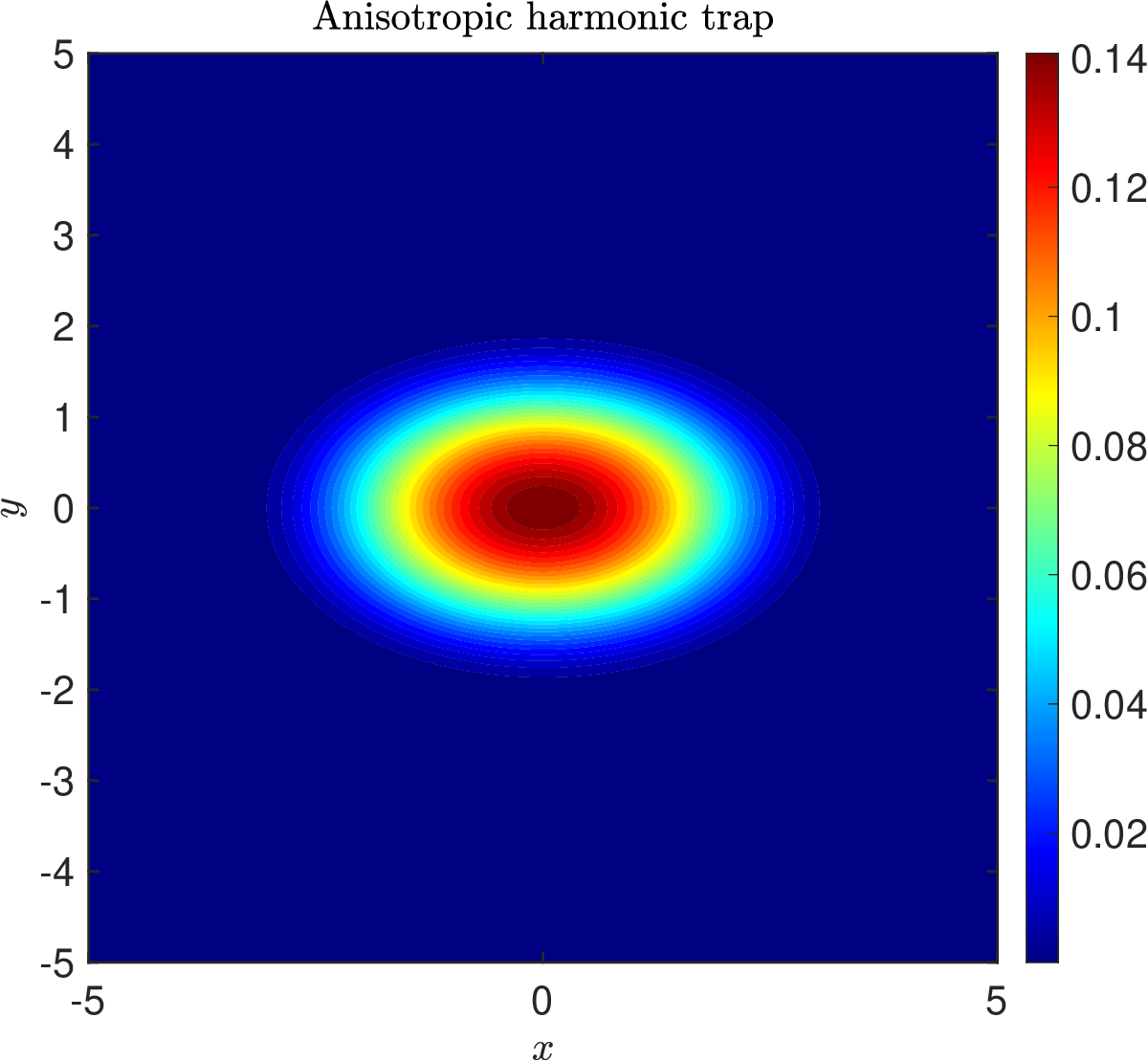}
\hfill
\includegraphics[width=0.48\textwidth]
{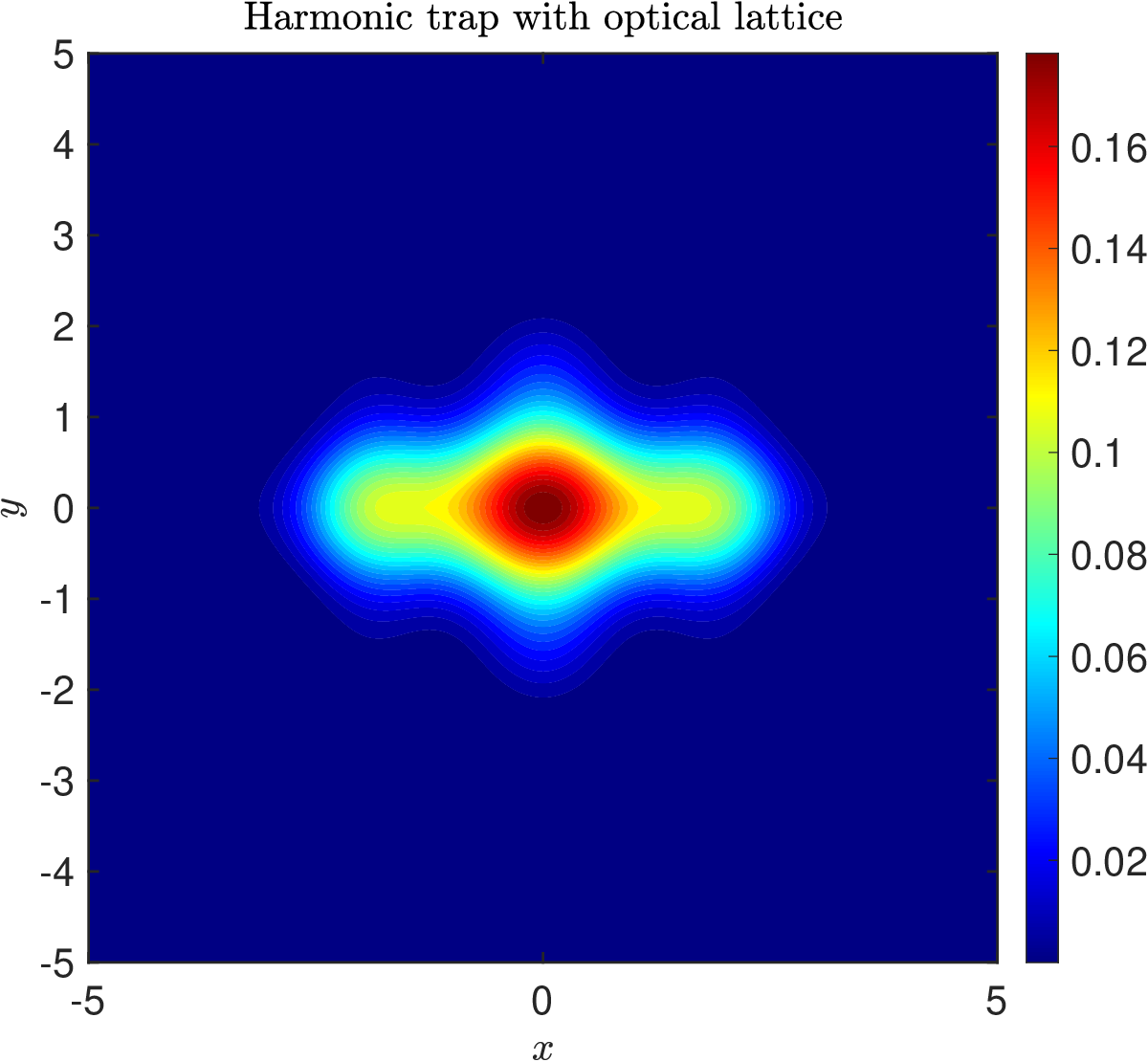}
\caption{
Two-dimensional ground-state densities computed on $[-16,16)^2$
with $N_x=N_y=512$, with the central region shown.
The left panel corresponds to the anisotropic harmonic trap
\eqref{eq:2d-anisotropic-harmonic}, while the right panel includes
the square optical lattice \eqref{eq:2d-optical-lattice}.
Both computations use $\eps=10^{-2}$, $\sig=10^{-4}$, and
$\beta=\delta=10$.
}
\label{fig:two-dimensional-examples}
\end{figure}

\section{Conclusion}
\label{sec:summary}

%

We have developed a positive-conservative Fourier optimization method
for computing ground states of Bose--Einstein condensates with
higher-order interactions in the density formulation. The kinetic and
potential terms are regularized while preserving the convex structure
of the density energy. On the computational domain, the regularized
energies $\Gamma$-converge to the original density energy as the
regularization parameters vanish.

The numerical results show spectral-type convergence of the Fourier
discretization when the regularization parameters are fixed. 
They also show
that decreasing the potential regularization parameter reduces the
regularization error but requires a finer Fourier resolution to resolve
the sharper low-density transition. In the two-stage optimization,
FISTA provides the main energy reduction, while the subsequent Newton
refinement further reduces the stationarity residual to high accuracy.

A joint analysis of domain truncation, Fourier discretization, and
regularization remains for future work.


\bigskip
\noindent{\bf Acknowledgements} 
This work was partially supported by the National Natural Science
Foundation of China under Grants 12201436 (X. Ruan) and 12501547 (B. Lin).

\appendix

%
%

\section{Implementation of the Newton refinement}
\label{app:newton-implementation}

For the Newton refinement, let
\[
a=r_\eps(\rho),\qquad
a'=r_\eps'(\rho),\qquad
a''=r_\eps''(\rho),\qquad
q=D_N\rho,\qquad
\dot q=D_N\eta.
\]
The products and quotients in the following formula are understood
componentwise. The Hessian action of the regularized kinetic term is
\begin{align}
\label{eq:hessian-action}
H_{\rm kin}(\rho)[\eta]
={}&
\frac14D_N^T
\left(
\frac{\dot q}{a}
-
\frac{q\,a'}{a^2}\eta
\right)
-
\frac18
\left[
2q\dot q\,\frac{a'}{a^2}
+
q^2
\left(
\frac{a''}{a^2}
-
\frac{2(a')^2}{a^3}
\right)\eta
\right].
\end{align}
The full Hessian action is
\begin{equation}
\label{eq:full-hessian-action}
H(\rho)[\eta]
=
H_{\rm kin}(\rho)[\eta]
+
Vp_\sig''(\rho)\eta
+
\beta\eta
+
\delta D_N^TD_N\eta.
\end{equation}

For the PCG preconditioner, let $D_{\rm FD}$ denote the periodic
forward-difference operator and define
\[
B_{\rm FD}
=
D_{\rm FD}
-
\operatorname{diag}\left(\frac{q\,a'}{a}\right),
\qquad
W
=
\operatorname{diag}\left(\frac{1}{4a}\right),
\qquad
R_{\rm kin}
=
\operatorname{diag}\left(
-\frac{q^2a''}{8a^2}
\right).
\]
We use the sparse matrix
\begin{equation}
\label{eq:fd-preconditioner}
P_{\rm FD}
=
B_{\rm FD}^TWB_{\rm FD}
+
R_{\rm kin}
+
\delta D_{\rm FD}^TD_{\rm FD}
+
\operatorname{diag}\bigl(\beta+Vp_\sig''(\rho)\bigr)
\end{equation}
as the PCG preconditioner. In the strongly convex regime,
$P_{\rm FD}$ is symmetric positive definite.

\end{document}